\def\1{\bm{1}}
\DeclareMathAlphabet{\mathsfit}{\encodingdefault}{\sfdefault}{m}{sl}
\SetMathAlphabet{\mathsfit}{bold}{\encodingdefault}{\sfdefault}{bx}{n}
\DeclareMathOperator*{\argmax}{arg\,max}
\DeclareMathOperator*{\argmin}{arg\,min}
\newtheorem{theorem}{Theorem}[section]
\newtheorem{lemma}[theorem]{Lemma}
\newtheorem{remark}[theorem]{Remark}
\newtheorem{example}[theorem]{Example}
\newtheorem{definition}[theorem]{Definition}
\newtheorem{assumption}{Assumption}
\title{A Fisher-Rao gradient flow for entropic mean-field min-max games}
\author{\name Razvan-Andrei Lascu \email rl2029@hw.ac.uk \\
      \addr School of Mathematical and Computer Sciences, Heriot-Watt University, Edinburgh, UK, and Maxwell Institute for Mathematical Sciences, Edinburgh, UK
      \AND
      \name Mateusz B. Majka \email m.majka@hw.ac.uk \\
      \addr School of Mathematical and Computer Sciences, Heriot-Watt University, Edinburgh, UK, and Maxwell Institute for Mathematical Sciences, Edinburgh, UK
      \AND
      \name \L ukasz Szpruch \email l.szpruch@ed.ac.uk\\
      \addr School of Mathematics, University of Edinburgh, UK, and The Alan Turing Institute, UK and Simtopia, UK}
\begin{document}

\maketitle

\begin{abstract}
Gradient flows play a substantial role in addressing many machine learning problems. We examine the convergence in continuous-time of a \textit{Fisher-Rao} (Mean-Field Birth-Death) gradient flow in the context of solving convex-concave min-max games with entropy regularization. We propose appropriate Lyapunov functions to demonstrate convergence with explicit rates to the unique mixed Nash equilibrium.
\end{abstract}

\section{Introduction}
The rapid progress of machine learning (ML) techniques such as Generative Adversarial Networks (GANs) \citep{NIPS2014_5ca3e9b1}, adversarial learning \citep{madry2018towards}, multi-agent reinforcement learning \citep{zhang2021multiagent} has propelled a surge of interest in the study of optimization problems on the space of probability measures in recent years. Particularly noteworthy are the numerous works, e.g., \citep{pmlr-v97-hsieh19b, domingo-enrich_mean-field_2021, wang2023exponentially, yulong, trillos2023adversarial, kim2024symmetric}, illustrating how training GANs and addressing adversarial robustness can be cast as min-max games over probability measures. In this setting, understanding the time evolution of the players' initial strategies to the equilibrium of the game leverages the use of gradient flows on the space of probability measures. A discussion about the applications of gradient flows in optimization and sampling can be found in a recent survey \citep{Trillos2023FromOT}. 

The Fisher-Rao (FR) gradient flow has been recently studied in the context of mean-field optimization problems \citep{https://doi.org/10.48550/arxiv.2206.02774}, accelerating Langevin-based sampling from multi-modal distributions \citep{lu_accelerating_2019, lu2022birthdeath}, and training shallow neural networks in the mean-field regime \citep{rotskoff_global_2019}. The motivation for employing FR dynamics in sampling from multi-modal distributions is their ability to globally transport the mass of a probability density between modes without traversing low-probability regions \citep{lu_accelerating_2019}. In the context of training neural networks, the benefit of using FR dynamics is similar in that they have the potential capability to avoid local minima of the loss function \citep{rotskoff_global_2019}.

The present paper aims to extend these results and focuses on the continuous-time convergence of the \textit{Fisher-Rao} (FR) gradient flow to the unique mixed Nash equilibrium of an entropy-regularized min-max game.

\subsection{Notation and setup}
For any $\mathcal{Z} \subseteq \mathbb{R}^d$, we denote by $\mathcal{P}_{\text{ac}}(\mathcal{Z})$ the space of probability measures on $\mathcal{Z}$ which are absolutely continuous with respect to the Lebesgue measure. Following a standard convention, elements in $\mathcal{P}_{\text{ac}}(\mathcal{Z})$ denote probability measures as well as their densities. Let $\mathcal{X}$, $\mathcal{Y} \subseteq \mathbb R^d$ and fix two reference probability measures $\pi(\mathrm{d}x) \propto e^{-U^{\pi}(x)}\mathrm{d}x \in \mathcal{P}_{\text{ac}}(\mathcal{X})$ and $\rho(\mathrm{d}y) \propto e^{-U^{\rho}(y)}\mathrm{d}y \in \mathcal{P}_{\text{ac}}(\mathcal{Y}),$ where $U^{\pi}: \mathcal{X} \to \mathbb{R}$ and $U^{\rho}: \mathcal{Y} \to \mathbb{R}$ are two measurable functions. The relative entropy $\operatorname{D_{KL}}(\cdot | \pi):\mathcal{P}(\mathcal{X}) \to [0, \infty]$ with respect to $\pi$ is given by
\begin{equation*}
\operatorname{D_{KL}}(\nu|\pi) = 
    \begin{cases} 
      \int_{\mathcal{X}} \log\left(\frac{\nu(x)}{\pi(x)}\right) \nu(x)\mathrm{d}x, & \text{ if $\nu$ is absolutely continuous with respect to $\pi$},\\
      +\infty, & \text{otherwise},
   \end{cases}
\end{equation*}
and we define $\operatorname{D_{KL}}(\mu|\rho)$ analogously for any $\mu \in \mathcal{P}(\mathcal{Y})$. Let $F:\mathcal{P}\left(\mathcal{X}\right) \times \mathcal{P}\left(\mathcal{Y}\right) \to \mathbb{R}$ be a convex-concave (possibly non-linear) function and $\sigma > 0$ be a regularization parameter. We consider the min-max problem
    \begin{equation}
    \label{eq:F-nonlinear}
        \min_{\nu \in \mathcal{P}\left(\mathcal{X}\right)}\max_{\mu \in \mathcal{P}\left(\mathcal{Y}\right)} V^{\sigma}(\nu, \mu), \text{ with } V^{\sigma}(\nu, \mu) \coloneqq F(\nu, \mu) + \frac{\sigma^2}{2}\left(\operatorname{D_{KL}}(\nu|\pi)-\operatorname{D_{KL}}(\mu|\rho)\right).
    \end{equation}
In order to solve \eqref{eq:F-nonlinear}, one typically aims to identify \textit{mixed Nash equilibria} (MNEs) \citep{neumann_morgenstern, nash}, characterized by pairs of measures $(\nu_{\sigma}^*, \mu_{\sigma}^*) \in \mathcal{P}(\mathcal{X}) \times \mathcal{P}(\mathcal{Y})$ that satisfy
\begin{equation}
\label{eq: nashdefmeasures}
    V^{\sigma}(\nu_{\sigma}^*, \mu) \leq V^{\sigma}(\nu_{\sigma}^*, \mu_{\sigma}^*) \leq V^{\sigma}(\nu, \mu_{\sigma}^*), \quad \text{for all } (\nu, \mu) \in \mathcal{P}(\mathcal{X}) \times \mathcal{P}(\mathcal{Y}).
\end{equation}
It is important to highlight that when $F$ is bilinear and $\sigma = 0$, i.e., $V^{0}(\nu,\mu) = \int_{\mathcal{Y}} \int_{\mathcal{X}} f(x,y) \nu(\mathrm{d}x)\mu(\mathrm{d}y),$ for some measurable function $f:\mathcal{X} \times \mathcal{Y} \to \mathbb R,$ measures characterized by \eqref{eq: nashdefmeasures} represent MNEs in the classical sense of a two-player zero-sum game. Results concerned with the existence and uniqueness of an MNE for \eqref{eq:F-nonlinear} are presented in Appendix A in \citet{lascu2023entropic} (alternatively, see Appendix C.2, C.3 in \citet{domingo-enrich_mean-field_2021} for the case when $F$ is bilinear). It is also proved in \citet{lascu2023entropic} that $V^{\sigma}$ $\Gamma$-converges to $F$ as $\sigma \downarrow 0,$ under mild assumptions on $F, \pi$ and $\rho.$

In optimization, the monotonic decrease of the objective function along the gradient flow is key to proving convergence. However, for min-max games the monotonic decrease no longer holds due to the conflicting actions of the players. Hence, a suitable Lyapunov function is needed. A common choice is the so-called Nikaidò-Isoda (NI) error \citep{Nikaid1955NoteON}, which, for all $(\nu, \mu) \in \mathcal{P}\left(\mathcal{X}\right) \times \mathcal{P}\left(\mathcal{Y}\right)$, can be defined as
\begin{equation*}
    \text{NI}(\nu,\mu) \coloneqq \max_{\mu' \in \mathcal{P}\left(\mathcal{Y}\right)} V^{\sigma}(\nu, \mu') - \min_{\nu' \in \mathcal{P}\left(\mathcal{X}\right)} V^{\sigma}(\nu', \mu).
\end{equation*}
From the saddle point condition \eqref{eq: nashdefmeasures}, it follows that $\text{NI}(\nu,\mu) \geq 0$ and $\text{NI}(\nu,\mu) = 0$ if and only if $(\nu, \mu)$ is a MNE. We will also consider an alternative Lyapunov function given by \eqref{eq:Lyapunov-KL}.

In what follows, we will introduce the FR gradient flow on the space $\left(\mathcal{P}_{\text{ac}}(\mathcal{X}) \times \mathcal{P}_{\text{ac}}(\mathcal{Y}), \operatorname{FR}\right),$ where $\operatorname{FR}$ is the Fisher-Rao distance defined by \eqref{eq:dynamic_FR_metric}.

\subsection{Fisher-Rao (mean-field birth-death) gradient flow}
\label{subsec: mean-field-FR}
The Fisher-Rao metric was introduced by \citet{Rao1992} via the Fisher information matrix, and since then has been extensively studied in the context of information geometry \citep{amari, ay2017information}. In this work, we are mainly focusing on the the dynamical formulation of the Fisher-Rao metric (see e.g. Section 2.2 in \citet{Gallouet2017}, Appendix C.2.1 in \cite{yan2023learninggaussianmixturesusing} and Section 3.3 in \citet{kondratyev}). For any $\lambda_0$, $\lambda_1 \in \mathcal{P}_{\text{ac}}(\mathcal{M})$, with $\mathcal{M} \subseteq \mathbb R^d,$ the variational representation of the FR distance is given by
\begin{equation}
\label{eq:dynamic_FR_metric}
\operatorname{FR}^2(\lambda_0,\lambda_1) \coloneqq \inf \Bigg\{  \int_0^1 \int_{\mathcal{M}} \left|r_s(x) - \int_{\mathcal{M}} r_s(y) \lambda_s(\mathrm{d}y)\right|^2 \lambda_s(\mathrm{d}x)\mathrm{d}s: \partial_t \lambda_t = \lambda_t\left(r_t - \int_{\mathcal{M}} r_t(y) \lambda_t(\mathrm{d}y)\right)\Bigg\},
\end{equation}
where the infimum is taken over all curves $[0,1] \ni t \mapsto (\lambda_t,r_t) \in \mathcal{P}_{\text{ac}}(\mathcal{M}) \times L^2(\mathcal{M};\lambda_t)$ solving
\begin{equation}
\label{eq:reaction-eq}
    \partial_t \lambda_t = \lambda_t\left(r_t - \int_{\mathcal{M}} r_t(y) \lambda_t(\mathrm{d}y)\right)
\end{equation} 
in the distributional sense, such that $t \mapsto \lambda_t$ is weakly continuous with endpoints $\lambda_0$ and $\lambda_1.$ The reaction term $r_t(x) \in  \mathbb R$ is a scalar that dictates how much mass is created/destroyed at time $t > 0$ and position $x \in \mathbb R.$ The integral in \eqref{eq:reaction-eq} guarantees that $\lambda_t$ is a probability measure for all $t \geq 0.$


Inspired by \citet{https://doi.org/10.48550/arxiv.2206.02774}, we consider the Fisher-Rao (mean-field birth-death) gradient flow on the space $\left(\mathcal{P}_{\text{ac}}(\mathcal{X}) \times \mathcal{P}_{\text{ac}}(\mathcal{Y}), \text{FR}\right)$ in the setting of \eqref{eq:F-nonlinear}. As opposed to the mean-field Best Response dynamics studied in \citet{lascu2023entropic}, which is another flow that can be used to solve \eqref{eq:F-nonlinear}, and which relies on introducing a fixed point perspective on min-max games, the FR dynamics utilize a gradient flow $(\nu_t, \mu_t)_{t \geq 0}$ in the Fisher-Rao geometry. As a first attempt at defining a Fisher-Rao gradient flow for solving \eqref{eq:F-nonlinear}, consider
\begin{equation}
\label{eq:birth-death-vf}
    \begin{cases}
      \partial_t \nu_t(x) = -\frac{\delta V^{\sigma}}{\delta \nu}(\nu_t, \mu_t, x) \nu_t(x),\\ 
      \partial_t \mu_t(y) = \frac{\delta V^{\sigma}}{\delta \mu}(\nu_t, \mu_t, y) \mu_t(y),
    \end{cases}
\end{equation}
with initial condition $(\nu_0, \mu_0) \in \mathcal{P}_{\text{ac}}(\mathcal{X}) \times \mathcal{P}_{\text{ac}}(\mathcal{Y}).$ We adopt the convention that
\begin{equation*}
    \int_{\mathcal{X}} \frac{\delta V^{\sigma}}{\delta \nu}(\nu_t, \mu_t, x) \nu_t(\mathrm{d}x) = \int_{\mathcal{Y}} \frac{\delta V^{\sigma}}{\delta \mu}(\nu_t, \mu_t, y) \mu_t(\mathrm{d}y) = 0
\end{equation*}
since the flat derivatives of $V^{\sigma}$ are uniquely defined up to an additive shift (see Definition \ref{def:fderivative} in Section \ref{app: AppB}). Thus, the total mass $1$ of probability measures is still preserved along the gradient flow. A formal derivation of the Fisher-Rao gradient flow can be found in Appendix \ref{sec:formal-FR}.
\begin{remark}
    Suppose $V^0$ is a bilinear payoff function and any mixed strategies $(\nu, \mu)$ are supported on finite sets of $m$ and $n$ pure strategies $\mathcal{X} \coloneqq \{x_1,x_2, \cdots, x_m\}$ and $\mathcal{Y} \coloneqq \{y_1,y_2, \cdots, y_n\},$ respectively. Then $\nu \in \mathcal{P}(\mathcal{X})$ and $\mu \in \mathcal{P}(\mathcal{Y})$ can be approximated by the empirical measures $\nu^m \coloneqq \frac{1}{m} \sum_{i=1}^m \delta_{x_i}$ and $\mu^n \coloneqq \frac{1}{n} \sum_{j=1}^n \delta_{y_j},$ respectively, and hence the Fisher-Rao gradient flow \eqref{eq:birth-death-vf} retrieves the replicator dynamics studied in evolutionary game theory \citep{cressman, Hofbauer_Sigmund_1998}, see also \citep{abe2022mutationdriven} for a related replicator-mutant dynamics.
\end{remark}

\subsubsection{Sketch of convergence proof for the FR gradient flow}
For the sake of presenting an intuitive heuristic argument, we ignore here for now that the \textit{flat derivatives} $(\nu, \mu, x) \mapsto \frac{\delta V^{\sigma}}{\delta \nu}(\nu, \mu, x)$ and $(\nu, \mu, y) \mapsto \frac{\delta V^{\sigma}}{\delta \mu}(\nu, \mu, y)$ may not exist for the $V^{\sigma}$ defined in \eqref{eq:F-nonlinear}, due to the relative entropy term $\operatorname{D_{KL}}$ being only lower semicontinuous with respect to the weak convergence topology (for this reason, in our analysis in Section \ref{sec:Main-Results}, we will replace these two derivatives with appropriately defined auxiliary functions $a$ and $b$). Nevertheless, we will now demonstrate that choosing the flow $(\nu_t, \mu_t)_{t \geq 0}$ as in \eqref{eq:birth-death-vf}, makes the function
\begin{equation}
\label{eq:Lyapunov-KL}
    t \mapsto \operatorname{D_{KL}}(\nu_{\sigma}^*|\nu_t) + \operatorname{D_{KL}}(\mu_{\sigma}^*|\mu_t)
\end{equation} 
decrease in $t$, under the assumption of convexity-concavity of $F.$ Let $(\nu, \mu) \in \mathcal{P}_{\text{ac}}(\mathcal{X}) \times \mathcal{P}_{\text{ac}}(\mathcal{Y}).$ Then assuming the existence of the flow $(\nu_t, \mu_t)_{t \geq 0}$ satisfying \eqref{eq:birth-death-vf}, and the differentiablity of the map $t \mapsto \operatorname{D_{KL}}(\nu|\nu_t) + \operatorname{D_{KL}}(\mu|\mu_t),$ we formally have that
\begin{multline*}
    \frac{\mathrm{d}}{\mathrm{d}t}\left(\operatorname{D_{KL}}(\nu|\nu_t) + \operatorname{D_{KL}}(\mu|\mu_t)\right) = \int_{\mathcal{X}} \partial_t \left(\nu(x) \log \frac{\nu(x)}{\nu_t(x)}\right) \mathrm{d}x + \int_{\mathcal{Y}} \partial_t \left(\mu(y) \log \frac{\mu(y)}{\mu_t(y)}\right) \mathrm{d}y\\
    = -\int_{\mathcal{X}} \left(\nu(x) - \nu_t(x)\right) \frac{\partial_t \nu_t(x)}{\nu_t(x)} \mathrm{d}x - \int_{\mathcal{Y}} \left(\mu(y) - \mu_t(y)\right) \frac{\partial_t \mu_t(y)}{\mu_t(y)}\mathrm{d}y\\
     = \int_{\mathcal{X}} \frac{\delta V^{\sigma}}{\delta \nu}(\nu_t, \mu_t, x) (\nu-\nu_t)(\mathrm{d}x) - \int_{\mathcal{Y}} \frac{\delta V^{\sigma}}{\delta \mu}(\nu_t, \mu_t, y) (\mu-\mu_t)(\mathrm{d}y),
\end{multline*}
where the second equality follows from the fact that $\int \partial_t \nu_t(x) \mathrm{d}x = \int \partial_t \mu_t(y) \mathrm{d}y = 0$. Then, assuming that $\nu \mapsto F(\nu, \mu)$ and $\mu \mapsto F(\nu, \mu)$ are convex and concave, respectively (see Assumption \ref{assumption: assump-F-conv-conc}), we observe that $\nu \mapsto V^{\sigma}(\nu, \mu)$ and $\mu \mapsto V^{\sigma}(\nu, \mu)$ are $\sigma$-strongly-convex and $\sigma$-strongly-concave relative to $\operatorname{D_{KL}},$ respectively (see Lemma \ref{lemma:Vinequalities} in Section \ref{appendix: AppC}), that is
\begin{equation*}
    V^{\sigma}(\nu, \mu_t) - V^{\sigma}(\nu_t, \mu_t) \geq \int_{\mathcal{X}} \frac{\delta V^{\sigma}}{\delta \nu}(\nu_t, \mu_t, x) (\nu-\nu_t)(\mathrm{d}x) + \frac{\sigma^2}{2}\operatorname{D_{KL}}(\nu| \nu_t),
\end{equation*}
\begin{equation*}
    V^{\sigma}(\nu_t, \mu) - V^{\sigma}(\nu_t, \mu_t) \leq \int_{\mathcal{Y}} \frac{\delta V^{\sigma}}{\delta \mu}(\nu_t, \mu_t, y)(\mu-\mu_t)(\mathrm{d}y) - \frac{\sigma^2}{2}\operatorname{D_{KL}}(\mu| \mu_t).
\end{equation*}
Therefore, we obtain that
\begin{multline}
\label{eq:KL-estimate}
    \frac{\mathrm{d}}{\mathrm{d}t}\left(\operatorname{D_{KL}}(\nu|\nu_t) + \operatorname{D_{KL}}(\mu|\mu_t)\right) \leq V^{\sigma}(\nu, \mu_t) - V^{\sigma}(\nu_t, \mu_t) + V^{\sigma}(\nu_t, \mu_t) - V^{\sigma}(\nu_t, \mu)\\ 
     - \frac{\sigma^2}{2}\operatorname{D_{KL}}(\nu|\nu_t)  - \frac{\sigma^2}{2}\operatorname{D_{KL}}(\mu|\mu_t).
\end{multline}
Setting $(\nu, \mu) = (\nu_{\sigma}^*, \mu_{\sigma}^*)$ in \eqref{eq:KL-estimate}, using the the saddle point condition \eqref{eq: nashdefmeasures}, i.e., $V^{\sigma}(\nu_{\sigma}^*, \mu_t) - V^{\sigma}(\nu_t, \mu_{\sigma}^*) \leq 0,$ and applying Gronwall's inequality gives
\begin{equation}
\label{eq:exp-conv-KL}
    \operatorname{D_{KL}}(\nu_{\sigma}^*|\nu_t) + \operatorname{D_{KL}}(\mu_{\sigma}^*|\mu_t) \leq e^{-\frac{\sigma^2}{2}t}\left(\operatorname{D_{KL}}(\nu_{\sigma}^*|\nu_0) + \operatorname{D_{KL}}(\mu_{\sigma}^*|\mu_0)\right).
\end{equation}
Since $\operatorname{D_{KL}}(\nu_{\sigma}^*|\nu) + \operatorname{D_{KL}}(\mu_{\sigma}^*|\mu) \geq 0$ with equality if and only if $\nu = \nu_{\sigma}^*$ and $\mu = \mu_{\sigma}^*$, it follows that the unique MNE of \eqref{eq:F-nonlinear} is achieved with exponential rate $e^{-\frac{\sigma^2}{2}t}.$ 

To prove convergence in terms of the NI error, first observe that, for any $(\nu, \mu) \in \mathcal{P}(\mathcal{X}) \times \mathcal{P}(\mathcal{Y}),$ we can write
\begin{equation*}
   V^{\sigma}(\nu_t, \mu) - V^{\sigma}(\nu, \mu_t) = V^{\sigma}(\nu_t, \mu) - V^{\sigma}(\nu_{\sigma}^*, \mu) + V^{\sigma}(\nu_{\sigma}^*, \mu) - V^{\sigma}(\nu, \mu_{\sigma}^*) + V^{\sigma}(\nu, \mu_{\sigma}^*) - V^{\sigma}(\nu, \mu_t).
\end{equation*}
By Theorem \ref{thm:existence} and Assumption \ref{assumption: boundedness-first-flat}, there exist $C_{1, \sigma}, C_{2, \sigma} > 0$ depending on $\sigma$ such that
\begin{equation*}
    \left|\frac{\delta V^{\sigma}}{\delta \nu}(\nu_t, \mu, x)\right| \leq  C_{1, \sigma}, \quad \left|\frac{\delta V^{\sigma}}{\delta \mu}(\nu, \mu_t, y)\right| \leq  C_{2, \sigma},
\end{equation*}
for all $(\nu, \mu) \in \mathcal{P}(\mathcal{X}) \times \mathcal{P}(\mathcal{Y}),$ and all $(x, y) \in \mathcal{X} \times \mathcal{Y}.$ Then, by Lemma \ref{lemma:Vinequalities} and Pinsker's inequality, we can show that
\begin{equation*}
    V^{\sigma}(\nu_t, \mu) - V^{\sigma}(\nu_{\sigma}^*, \mu) + V^{\sigma}(\nu, \mu_{\sigma}^*) - V^{\sigma}(\nu, \mu_t) \leq 2C_{\sigma}\sqrt{\operatorname{D_{KL}}(\nu_{\sigma}^*|\nu_t) + \operatorname{D_{KL}}(\mu_{\sigma}^*|\mu_t)},
\end{equation*}
where $C_{\sigma} \coloneqq \max\{C_{1, \sigma}, C_{2, \sigma}\}.$ Hence, by \eqref{eq:exp-conv-KL}, we obtain
\begin{equation}
\label{eq:estimate-sqrt}
    V^{\sigma}(\nu_t, \mu) - V^{\sigma}(\nu, \mu_t) \leq 2C_{\sigma}e^{-\frac{\sigma^2}{4}t}\sqrt{\operatorname{D_{KL}}(\nu_{\sigma}^*|\nu_0) + \operatorname{D_{KL}}(\mu_{\sigma}^*|\mu_0)} + V^{\sigma}(\nu_{\sigma}^*, \mu) - V^{\sigma}(\nu, \mu_{\sigma}^*).
\end{equation}
Since $(\nu_{\sigma}^*, \mu_{\sigma}^*)$ is the MNE of \eqref{eq:F-nonlinear}, we have
\begin{equation*}
   \max_{\mu} V^{\sigma}(\nu_{\sigma}^*, \mu) = \min_{\nu} V^{\sigma}(\nu, \mu_{\sigma}^*) = V^{\sigma}(\nu_{\sigma}^*, \mu_{\sigma}^*).
\end{equation*}
Therefore, maximizing over $(\nu, \mu) \in \mathcal{P}(\mathcal{X}) \times \mathcal{P}(\mathcal{Y})$ in \eqref{eq:estimate-sqrt} gives
\begin{equation*}
    \operatorname{NI}(\nu_t, \mu_t)\leq 2C_{\sigma}e^{-\frac{\sigma^2}{4}t}\sqrt{\operatorname{D_{KL}}(\nu_{\sigma}^*|\nu_0) + \operatorname{D_{KL}}(\mu_{\sigma}^*|\mu_0)}.
\end{equation*}
Finally, observe that, by \eqref{eq:comparable-MNE}, we have
\begin{equation*}
    \operatorname{D_{KL}}(\nu_{\sigma}^*|\nu_0) + \operatorname{D_{KL}}(\mu_{\sigma}^*|\mu_0) < \infty.
\end{equation*}



\subsection{Our contribution}
\label{sec: suitable-lyapunov}
We prove the existence of the FR gradient flow $(\nu_t, \mu_t)_{t \geq 0}$ and show that it converges with rates $\mathcal{O}\left(e^{-\frac{\sigma^2}{2}t}\right)$ and $\mathcal{O}\left(e^{-\frac{\sigma^2}{4}t}\right)$ to the unique MNE of \eqref{eq:F-nonlinear} with respect to the Lyapunov functions $t \mapsto \operatorname{D_{KL}}(\nu_{\sigma}^*|\nu_t) + \operatorname{D_{KL}}(\mu_{\sigma}^*|\mu_t)$ and $t \mapsto \operatorname{NI}\left(\nu_t, \mu_t\right).$ 

\subsection{Related works}
\label{subsec:LRintro}
Recent intensive research has been dedicated to examining the convergence of the Wasserstein gradient flow to MNEs within the specific formulation of game \eqref{eq:F-nonlinear} in which $F$ is bilinear ($F(\nu, \mu) = \int_{\mathcal{Y}}\int_{\mathcal{X}} f(x,y) \nu(\mathrm{d}x) \mu(\mathrm{d}y)$) and regularized by entropy rather than relative entropy $\operatorname{D_{KL}}.$ The spaces $\mathcal{X}$ and $\mathcal{Y}$ are assumed to be either compact smooth manifolds without boundary, embedded in the Euclidean space, or Euclidean tori, while $f$ exhibits sufficient regularity, typically being at least continuously differentiable with Lipschitz conditions satisfied by $\nabla_x f$ and $\nabla_y f.$ This line of research is explored in works such as \citet{domingo-enrich_mean-field_2021, ma2022provably, yulong, wang2023exponentially}.

In this context, \citet{ma2022provably,yulong} investigate the convergence of the Wasserstein gradient flow, proving exponential convergence to the MNE when the players' flows $(\nu_t)_{t \geq 0}$ and $(\mu_t)_{t \geq 0}$ converge at different rates. In \citet{ma2022provably}, the analysis considers the scenario where one player's flow reaches equilibrium while the other remains governed by the Wasserstein gradient flow equation. Notably, \citet[Theorem $5$]{ma2022provably} shows the convergence (without explicit rate) of the flow $(\nu_t,\mu_t)_{t \geq 0}$ to the unique MNE under these separated dynamics.

On the other hand, \citet{yulong} examines the situation where the players' flows evolve at varying speeds but with finite timescale separation, meaning that neither player has reached equilibrium. Here, \citet[Theorem $2.1$]{yulong} proves exponential convergence of the finitely timescale-separated Wasserstein gradient flow to the unique MNE, with the convergence rate depending upon regularization and timescale separation parameters. In contrast to \citet{ma2022provably,yulong}, we prove that the Fisher-Rao gradient flow \eqref{eq:birth-death-vf} converges exponentially fast to the unique MNE while players' dynamics converge at the same speed.

Other works such as \citep{domingo-enrich_mean-field_2021, wang2023exponentially} focused on the convergence of the Wasserstein-Fisher-Rao (WFR) gradient flow, combining both the Wasserstein gradient flow (allowing particles to diffuse in space) and the Fisher-Rao gradient flow (forcing particles to evade low probability regions). 
Assuming that $F$ is bilinear and $\sigma = 0$, \citet{domingo-enrich_mean-field_2021} investigates the Wasserstein-Fisher-Rao gradient flow's convergence (without giving explicit rates). For $t_0 > 0$ (dependent on the parameters governing the individual contributions of the Wasserstein and Fisher-Rao components in the WFR gradient flow), and under circumstances where the Fisher-Rao component predominates over the Wasserstein component, \citet{domingo-enrich_mean-field_2021} prove a characterization result of $\epsilon$-approximate MNE for the WFR gradient flow. More precisely, \citet[Theorem $2$]{domingo-enrich_mean-field_2021} establishes that the pair $\left(\frac{1}{t_0}\int_{0}^{t_0} \nu_s \mathrm{d}s, \frac{1}{t_0}\int_{0}^{t_0} \mu_s \mathrm{d}s\right)$ is an $\epsilon$-approximate MNE of the game, i.e., $\operatorname{NI}\left(\frac{1}{t_0}\int_{0}^{t_0} \nu_s \mathrm{d}s, \frac{1}{t_0}\int_{0}^{t_0} \mu_s \mathrm{d}s\right) \leq \epsilon,$ for any arbitrarily chosen $\epsilon > 0.$

Lastly, \citet{wang2023exponentially} introduces a proximal point method that can be viewed as a discrete-time counterpart of the WFR gradient flow. Working within the framework of bilinear $F,$ with $\sigma=0,$ and unique MNE, \citet[Theorem $2.2$]{wang2023exponentially} establishes the local exponential convergence of the iterates to the unique MNE of the game with respect to the NI error and the WFR distance, provided that the initialization is done in close vicinity to the MNE. The algorithm proposed by \citet{wang2023exponentially} assumes that both players update the positions and weights of the particles simultaneously. However, this is not the only possible discretization for the WFR gradient flow. As highlighted in \cite{lascu2024mirror} for the discrete-time Fisher-Rao gradient flow, the convergence rates of the dynamics in a game differ depending on the order the players move: either simultaneously (players move at the same time) or sequentially (each player moves upon observing the opponents’ moves). Although the continuous-time analysis of the gradient flow does not capture these two situations, it can serve as a guide for designing discrete-time approximations of implementable algorithms.

Moreover, it is argued in \citet{wang2023exponentially} that the results of Theorem $2.2$ hold only for discrete-time algorithms, and do not imply the convergence of the continuous-time WFR gradient flow because sending the step-sizes of the time discretization to zero will force the initialization $(\nu_0, \mu_0)$ to be already an MNE (see the discussion after Theorem $2.2$). Unlike the assumption in \citet{wang2023exponentially} that guarantees convergence of the discrete-time algorithm as long as the initialization is close to the MNE, i.e., $\operatorname{NI}(\nu_0, \mu_0) \leq r_0,$ for some $r_0 > 0,$ depending on the step-sizes, our warm start condition (Assumption \ref{assump:m0}) for the Fisher-Rao gradient flow only imposes absolute continuity and comparability with \textit{a priori} known reference measures $\pi$ and $\rho.$ 

Furthermore, \citet{wang2023exponentially} discuss that if $(\nu_0, \mu_0)$ is supported on the entire space $\mathcal{X}$ and $\mathcal{Y},$ respectively, then it is expected that the convergence of the discrete-time Fisher-Rao dynamics to an MNE occurs at sub-linear rate in the worst case, and not an exponential rate (see Appendix \ref{sec:non-interactive}). Also, if the strategy spaces $\mathcal{X}$ and $\mathcal{Y}$ are finite and the initialization $(\nu_0, \mu_0)$ is supported on a large number of points uniformly covering $\mathcal{X}$ and $\mathcal{Y},$ respectively, there is no guarantee for last-iterate convergence of discrete-time Fisher-Rao dynamics to an MNE of $V^0.$ As showed by \citet{wei2021linear,Daskalakis2019LastIterateCZ}, last-iterate convergence guarantees for games on finite strategy spaces $\mathcal{X}$ and $\mathcal{Y}$ hold under the assumption that the MNE is unique, which may not hold for $V^0.$

While it seems natural to combine the Wasserstein and the Fisher-Rao gradient flows, rigorously proving explicit convergence rates for the continuous-time WFR gradient flow for games such as \eqref{eq:F-nonlinear} is a challenging and, to our knowledge, still open problem. In this work, we provide an additional step by establishing last-iterate exponential convergence of the Fisher-Rao gradient flow to the unique MNE measured in both KL divergence and NI error (see Theorem \ref{thm:convergence}). 



\section{Main results}
\label{sec:Main-Results}
As we explained in the introduction, we study the convergence of the FR gradient flow to the unique MNE of the entropy-regularized two-player zero-sum game given by \eqref{eq:F-nonlinear},
where $F:\mathcal{P}\left(\mathcal{X}\right) \times \mathcal{P}\left(\mathcal{Y}\right) \to \mathbb{R}$ is a non-linear function and $\sigma > 0.$ Throughout the paper, we have the following assumptions.

\begin{assumption}[Convexity-concavity of $F$]
\label{assumption: assump-F-conv-conc}
	Suppose $F$ admits first order flat derivatives with respect to both $\nu$ and $\mu$ as stated in Definition \ref{def:fderivative}.  
    Furthermore, suppose that $F$ is convex in $\nu$ and concave in $\mu$, i.e., for any $\nu, \nu' \in \mathcal{P}\left(\mathcal{X}\right)$ and any $\mu, \mu' \in \mathcal{P}\left(\mathcal{Y}\right)$, we have
		\begin{equation}
        \label{eq:convexF}
		F(\nu', \mu) - F(\nu, \mu) \geq \int_{\mathcal{X}} \frac{\delta F}{\delta \nu}(\nu,\mu,x) (\nu'-\nu)(\mathrm{d}x), 
		\end{equation}
        \begin{equation}
        \label{eq:concaveF}
        F(\nu, \mu') - F(\nu, \mu) \leq \int_{\mathcal{Y}} \frac{\delta F}{\delta \mu}(\nu,\mu,y) (\mu'-\mu)(\mathrm{d}y).
        \end{equation}
\end{assumption}
\begin{assumption}[Boundedness of first order flat derivatives]
\label{assumption: boundedness-first-flat}
    Suppose $F$ admits first order flat derivatives with respect to both $\nu$ and $\mu$ as stated in Definition \ref{def:fderivative} and there exist constants $C_{\nu}, C_{\mu} > 0$ such that for all $(\nu, \mu) \in \mathcal{P}(\mathcal{X}) \times \mathcal{P}(\mathcal{Y})$ and for all $(x, y) \in \mathcal{X} \times \mathcal{Y}$, we have
		\begin{equation*}
		\left| \frac{\delta F}{\delta \nu} (\nu, \mu, x) \right| \leq C_{\nu}, \quad  \left| \frac{\delta F}{\delta \mu} (\nu, \mu, y) \right| \leq C_{\mu}.
		\end{equation*}
\end{assumption}
\begin{assumption}[Boundedness of second order flat derivatives]
\label{assump:F}
	Suppose $F$ admits second order flat derivatives as stated in Definition \ref{def:fderivative} and there exist constants $C_{\nu, \nu}, C_{\mu, \mu}, C_{\nu, \mu}, C_{\mu, \nu} > 0$ such that for all $(\nu, \mu) \in \mathcal{P}(\mathcal{X}) \times \mathcal{P}(\mathcal{Y})$ and for all $(x, y), (x', y') \in \mathcal{X} \times \mathcal{Y}$, we have
		\begin{multline*}
		\left| \frac{\delta^2 F}{\delta \nu^2} (\nu, \mu, x, x') \right| \leq C_{\nu, \nu}, \quad \left| \frac{\delta^2 F}{\delta \mu^2} (\nu, \mu, y, y') \right| \leq C_{\mu, \mu},\\ \left| \frac{\delta^2 F}{\delta \nu \delta \mu} (\nu, \mu, y, x) \right| \leq C_{\nu, \mu}, \quad \left| \frac{\delta^2 F}{\delta \mu \delta \nu} (\nu, \mu, x, y) \right| \leq C_{\mu, \nu}.
        \end{multline*}
\end{assumption}
Note that the order of the flat derivatives in $\nu$ and $\mu$ can be interchanged due to \citet[Lemma B$.2$]{lascu2023entropic}. Using Assumption \ref{assump:F}, it is straightforward to check that there exist constants $C_{\nu}', C_{\mu}' > 0$ such that for all $(\nu,\mu) \in \mathcal{P}_{\text{ac}}(\mathcal{X}) \times \mathcal{P}_{\text{ac}}(\mathcal{Y})$, $(\nu',\mu') \in \mathcal{P}_{\text{ac}}(\mathcal{X}) \times \mathcal{P}_{\text{ac}}(\mathcal{Y})$ and all $(x, y) \in \mathcal{X} \times \mathcal{Y}$, we have that
        \begin{equation}
        \label{eq: 2.5i}
        \left|\frac{\delta F}{\delta \nu}(\nu, \mu, x) - \frac{\delta F}{\delta \nu}(\nu', \mu', x)\right| \leq C_{\nu}'\left(\operatorname{TV}(\nu,\nu') + \operatorname{TV}(\mu,\mu')\right),
        \end{equation}
        \begin{equation}
        \label{eq: 2.5ii}
        \left|\frac{\delta F}{\delta \mu}(\nu, \mu, y) - \frac{\delta F}{\delta \mu}(\nu', \mu', y)\right| \leq C_{\mu}'\left(\operatorname{TV}(\nu,\nu') + \operatorname{TV}(\mu,\mu')\right).
        \end{equation}
\begin{remark}
\label{remark:example-f}
    An example of a function $F$ which satisfies Assumptions \ref{assumption: assump-F-conv-conc}, \ref{assumption: boundedness-first-flat}, \ref{assump:F} is $F(\nu, \mu) = \int_{\mathcal{Y}}\int_{\mathcal{X}} f(x,y) \nu(\mathrm{d}x) \mu(\mathrm{d}y),$ for a function $f:\mathcal{X} \times \mathcal{Y} \to \mathbb{R}$ which is bounded but possibly non-convex-non-concave. Indeed, Assumption \ref{assumption: assump-F-conv-conc} is trivially satisfied by such $F,$ while Assumptions \ref{assumption: boundedness-first-flat} and \ref{assump:F} hold due to the boundedness of $f.$ This type of objective function $F$ is prototypical in applications including the training of GANs (see, e.g., \citet{pmlr-v70-arjovsky17a, pmlr-v97-hsieh19b}) and distributionally robust optimization (see, e.g, \citet{madry2018towards, sinha2018certifiable}).

    Another example that can be viewed as a particular case of our general framework is the objective function of Wasserstein-GANs (WGANs) with gradient penalty \citep{gulrajani,petzka2018on}. Following \cite{conforti_game_2020, kazeykina_ergodicity_2022}, GANs can be framed as a min-max problem over the space of probability measures. Consider the following class of parameterized functions as the class of choices for the discriminator:
    \begin{equation*}
    \{x \mapsto \mathbb E[f(Y,x)]: \operatorname{Law}(Y) = \mu \in \mathcal{P}(\mathcal{Y})\}.
    \end{equation*}
     The map $x \mapsto \mathbb E_{Y \sim \mu}[f(Y,x)]$ is the output of a two-layer neural network with parameters $Y$ and bounded, continuous and non-constant activation function $f.$ The objective function of the GAN reads
\begin{equation*}
    \min_{\nu \in \mathcal{P}(\mathcal{X})} \max_{\mu \in \mathcal{P}(\mathcal{Y})} \int_{\mathcal{X}} \mathbb E_{Y \sim \mu}[f(Y,x)] (\nu-\bar{\nu})(\mathrm{d}x) = \min_{\nu \in \mathcal{P}(\mathcal{X})} \max_{\mu \in \mathcal{P}(\mathcal{Y})} \int_{\mathcal{Y}} \int_{\mathcal{X}} f(x,y) (\nu-\bar{\nu})(\mathrm{d}x) \mu(\mathrm{d}y),
\end{equation*}
   where $\Bar{\nu}$ is the distribution of the true data. Assuming that $\mathcal{X} \ni x \mapsto f(x,y) \in \mathbb R$ is differentiable and $1$-Lipschitz, we retrieve the WGAN. The gradient penalty regularization terms proposed by \cite{gulrajani,petzka2018on} are added in order to ensure that the discriminator remains $1$-Lipschitz along the interpolation between the generated and true data. Thus, the objective function can be expressed as
    \begin{align*}
    \min_{\nu \in \mathcal{P}(\mathcal{X})} \max_{\mu \in \mathcal{P}(\mathcal{Y})} \Bigg\{\int_{\mathcal{Y}} \int_{\mathcal{X}} f(x,y) (\nu-\Bar{\nu})(\mathrm{d}x)\mu(\mathrm{d}y) + \lambda t \int_{\mathcal{Y}} \int_{\mathcal{X}} \left(\|\nabla_{x} f(x,y)\|-1\right)^2 (\nu-\Bar{\nu})(\mathrm{d}x)\mu(\mathrm{d}y)\\ + \lambda (1-t) g\left(\int_{\mathcal{Y}} \int_{\mathcal{X}}\left(\|\nabla_{x} f(x,y)\|-1\right)^2 \Bar{\nu}(\mathrm{d}x)\mu(\mathrm{d}y)\right)\Bigg\},
    \end{align*}
    where $g:\mathbb R \to \mathbb R$ is a twice differentiable concave function with bounded derivatives, $\lambda > 0$ is a regularization parameter and $t \in [0,1].$ Since $g$ is concave, the term inside $g$ is linear in $\mu$ and independent of $\nu,$ it follows that the third term in the optimization problem is concave in $\mu.$ This fact together with the linearity in $\nu$ and $\mu$ of the first two terms shows that the objective function satisfies Assumption \ref{assumption: assump-F-conv-conc}. Since $f$ is bounded, differentiable and $1$-Lipschitz, and $g$ is twice differentiable with bounded derivatives, Assumptions \ref{assumption: boundedness-first-flat} and \ref{assump:F} are also satisfied.
\end{remark}
\begin{assumption}[Ratio condition]
\label{assump:m0}
		Suppose $(\nu_0, \mu_0) \in \mathcal{P}(\mathcal{X}) \times \mathcal{P}(\mathcal{Y})$ are absolutely continuous and comparable with $\pi$ and $\rho,$ respectively, in the sense that
		\begin{enumerate}
		\item There exist constants $r_{\nu}, r_{\mu}>0$ such that
		\begin{equation}
        \label{eq:assumpInf}
		\inf_{x \in \mathcal{X}} \frac{\nu_0(x)}{\pi(x)} \geq r_{\nu}, \quad \inf_{y \in \mathcal{Y}} \frac{\mu_0(y)}{\rho(y)} \geq r_{\mu}.
		\end{equation}
		\item There exist constants $R_{\nu}, R_{\mu} > 1$ such that
		\begin{equation}
        \label{eq:assumpSup}
		\sup_{x \in \mathcal{X}} \frac{\nu_0(x)}{\pi(x)} \leq R_{\nu}, \quad \sup_{y \in \mathcal{Y}} \frac{\mu_0(y)}{\rho(y)} \leq R_{\mu}.
		\end{equation}
	\end{enumerate}
\end{assumption}
It can be proved (see, e.g., \citet[Proposition A$.1$]{lascu2023entropic}) that the MNE $(\nu_{\sigma}^*, \mu_{\sigma}^*)$ of \eqref{eq:F-nonlinear} satisfies the fixed point equations
\begin{equation}
\label{eq: nu-implicit}
\nu_{\sigma}^*(x) = \frac{1}{Z(\nu_{\sigma}^*, \mu_{\sigma}^*)} \exp{\left( -\frac{2}{\sigma^2}\frac{\delta F}{\delta \nu} (\nu_{\sigma}^*, \mu_{\sigma}^*, x) - U^{\pi}(x) \right)},
\end{equation}
\begin{equation}
\label{eq: mu-implicit}
    \mu_{\sigma}^*(y) = \frac{1}{Z'(\nu_{\sigma}^*,\mu_{\sigma}^*)} \exp{\left( \frac{2}{\sigma^2}\frac{\delta F}{\delta \mu} (\nu_{\sigma}^*, \mu_{\sigma}^*, y) - U^{\rho}(y) \right)},
\end{equation}
where $Z(\nu_{\sigma}^*, \mu_{\sigma}^*)$ and $Z'(\nu_{\sigma}^*, \mu_{\sigma}^*)$ are normalizing constants.

Combining \eqref{eq: nu-implicit} and \eqref{eq: mu-implicit} and Assumption \ref{assumption: boundedness-first-flat}, we deduce that Assumption \ref{assump:m0} is equivalent to assuming that there exist constants $\bar{r}_{\nu}, \bar{r}_{\mu} > 0$ and $\bar{R}_{\nu}, \bar{R}_{\mu} > 1$ such that for all $(x, y) \in \mathcal{X} \times \mathcal{Y}$,
\begin{equation}
\label{eq:comparable-MNE}
\bar{r}_{\nu} \leq \frac{\nu_0(x)}{\nu_{\sigma}^*(x)} \leq \bar{R}_{\nu}, \quad \bar{r}_{\mu} \leq \frac{\mu_0(y)}{\mu_{\sigma}^*(y)} \leq \bar{R}_{\mu}.
\end{equation}  
We emphasize that Assumption \ref{assump:m0} is natural in the context of Fisher-Rao flows. From \eqref{eq:birth-death-vf}, we observe that the support of the measures along the gradient flow does not increase. Thus, it is essential that the initialization is comparable with the MNE (cf. \eqref{eq:comparable-MNE}) as reflected in Assumption \ref{assump:m0}.   
It is observed in \citet{https://doi.org/10.48550/arxiv.2206.02774}, that Assumption \ref{assump:m0} can be understood as a ``warm start'' type of condition.

Returning to the question of flat differentiability of $V^{\sigma}$, which was raised in Subsection \ref{subsec: mean-field-FR}, if we assume that $F$ is flat differentiable with respect to both $\nu$ and $\mu$ (see Assumption \ref{assumption: assump-F-conv-conc}), then the maps $(\nu, \mu, x) \mapsto a(\nu, \mu, x) \coloneqq \frac{\delta F}{\delta \nu}(\nu, \mu, x) + \frac{\sigma^2}{2}\log \left(\frac{\nu(x)}{\pi(x)}\right) - \frac{\sigma^2}{2}\operatorname{D_{KL}}(\nu|\pi)$ and $(\nu, \mu, y) \mapsto b(\nu, \mu, y) \coloneqq \frac{\delta F}{\delta \mu}(\nu, \mu, y) - \frac{\sigma^2}{2}\log \left(\frac{\mu(y)}{\rho(y)}\right) + \frac{\sigma^2}{2}\operatorname{D_{KL}}(\mu|\rho)$ are well-defined and formally correspond to the flat derivatives $\frac{\delta V^{\sigma}}{\delta \nu}(\nu, \mu, \cdot)$ and $\frac{\delta V^{\sigma}}{\delta \mu}(\nu, \mu, \cdot)$, respectively, for those measures $\nu$ and $\mu$ for which such derivatives exist (note that we will only need to consider $a$ and $b$ along our gradient flow $(\nu_t,\mu_t)_{t \geq 0}$, so our argument can be interpreted as stating that, while $V^{\sigma}$ is not flat differentiable everywhere, it is indeed flat differentiable along our gradient flow). The relative entropy terms $D_{\operatorname{KL}}$ appear in the definition of $a$ and $b$ as normalizing constants to ensure that $\int_{\mathcal{X}} a(\nu,\mu,x) \nu(\mathrm{d}x) = 0$ and $\int_{\mathcal{Y}} b(\nu,\mu,y) \mu(\mathrm{d}y) = 0$, since we adopt the convention that the flat derivatives of $F$ are uniquely defined up to an additive shift (see Definition \ref{def:fderivative} in Section \ref{app: AppB}). Motivated by this discussion, we define the Fisher-Rao gradient flow $(\nu_t, \mu_t)_{t \geq 0}$ on the space $\left(\mathcal{P}_{\text{ac}}(\mathcal{X}) \times \mathcal{P}_{\text{ac}}(\mathcal{Y}), \text{FR}\right)$ by
\begin{equation}
\label{eq:birthdeath}
    \begin{cases}
      \partial_t \nu_t(x) = -a(\nu_t, \mu_t, x) \nu_t(x),\\  
      \partial_t \mu_t(y) = b(\nu_t, \mu_t, y) \mu_t(y), 
    \end{cases}
\end{equation}
with initial condition $(\nu_0, \mu_0) \in \mathcal{P}_{\text{ac}}(\mathcal{X}) \times \mathcal{P}_{\text{ac}}(\mathcal{Y})$. We will establish the existence of a solution to \eqref{eq:birthdeath} in Theorem \ref{thm:existence}. As we will demonstrate, the flow of densities $(\nu_t, \mu_t)_{t \geq 0}$ is differentiable in time (cf.\ equation \eqref{eq:flowSolution}), and hence the solution to \eqref{eq:birthdeath} can be interpreted just as a classical solution to an ordinary differential equation.

The following result extends \citet[Theorem $2.1$]{https://doi.org/10.48550/arxiv.2206.02774} to the case of two-player zero-sum games by showing that the Fisher-Rao gradient flow \eqref{eq:birthdeath} admits a unique solution $(\nu_t, \mu_t)_{t \geq 0}.$

\begin{theorem}
\label{thm:existence}
    Suppose that Assumption \ref{assumption: boundedness-first-flat}, \ref{assump:F} and condition \eqref{eq:assumpSup} from Assumption \ref{assump:m0} hold. Then for each $(\nu_0, \mu_0) \in \mathcal{P}_{\text{ac}}(\mathcal{X}) \times \mathcal{P}_{\text{ac}}(\mathcal{Y}),$ there exists a unique pair of continuous and differentiable in time flows $(\nu_t,\mu_t)_{t \in [0, \infty)} \in \mathcal{P}_{\text{ac}}(\mathcal{X}) \times \mathcal{P}_{\text{ac}}(\mathcal{Y})$ satisfying the system of equations \eqref{eq:birthdeath}. Moreover, for $t > 0$,
	\begin{equation}
 \label{eq:KLmtBound}
\operatorname{D_{KL}}(\nu_t|\pi)  \leq 2 \log R_{\nu} + \frac{4}{\sigma^2}C_{\nu}, \quad \operatorname{D_{KL}}(\mu_t|\rho)  \leq 2 \log R_{\mu} + \frac{4}{\sigma^2}C_{\mu}
\end{equation}
and there exist constants $R_{1, \nu}, R_{1, \mu} > 1$ such that for all $t > 0$,
\begin{equation}
\label{eq:ratioBoundAllt}
\sup_{x \in \mathcal{X}} \frac{\nu_t(x)}{\pi(x)} \leq R_{1, \nu}, \quad \sup_{y \in \mathcal{Y}} \frac{\mu_t(y)}{\rho(y)} \leq R_{1, \mu}.
\end{equation}
Additionally, if condition \eqref{eq:assumpInf} from Assumption \ref{assump:m0} holds, then there exist constants $r_{1, \nu}$, $r_{1, \mu} > 0$ such that for all $t > 0$,
\begin{equation}
\label{eq:ratioBoundAllt2}
\inf_{x \in \mathcal{X}} \frac{\nu_t(x)}{\pi(x)} \geq r_{1, \nu}, \quad \inf_{y \in \mathcal{Y}} \frac{\mu_t(y)}{\rho(y)} \geq r_{1, \mu}.
\end{equation}
\end{theorem}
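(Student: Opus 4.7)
My plan is to reformulate the PDE system \eqref{eq:birthdeath} as a mild integral equation, obtain $(\nu_t,\mu_t)_{t \geq 0}$ as the unique fixed point of a suitable contraction, and then read the a priori bounds \eqref{eq:KLmtBound}--\eqref{eq:ratioBoundAllt2} directly off the resulting representation. Setting $h_t(x) := \log(\nu_t(x)/\pi(x))$ and $g_t(y) := \log(\mu_t(y)/\rho(y))$, the first equation of \eqref{eq:birthdeath} rewrites as the linear ODE
\begin{equation*}
\partial_t h_t(x) = -\frac{\delta F}{\delta \nu}(\nu_t,\mu_t,x) - \frac{\sigma^2}{2}h_t(x) + \frac{\sigma^2}{2}\operatorname{D_{KL}}(\nu_t|\pi),
\end{equation*}
and analogously for $g_t$. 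Since the $\operatorname{D_{KL}}$ terms are independent of the spatial variable, they serve purely as normalising constants, and Duhamel's formula yields
\begin{equation*}
\nu_t(x) = \frac{1}{Z_t^{\nu}[\nu,\mu]}\,\pi(x)\bigl(\nu_0(x)/\pi(x)\bigr)^{\alpha_t}\exp\!\left(-\int_0^t e^{-\sigma^2(t-s)/2}\frac{\delta F}{\delta \nu}(\nu_s,\mu_s,x)\,\mathrm{d}s\right),
\end{equation*}
with $\alpha_t := e^{-\sigma^2 t/2}$, an analogous formula for $\mu_t$, and $Z_t^{\nu}[\nu,\mu]$ chosen so that $\int \nu_t = 1$. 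Solving \eqref{eq:birthdeath} is thus equivalent to finding a fixed point of the operator $\Phi$ defined by the right-hand side.

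For local existence and uniqueness, I would set up $\Phi$ on $X_T := C([0,T];\mathcal{P}(\mathcal{X}) \times \mathcal{P}(\mathcal{Y}))$ with the metric $d((\nu,\mu),(\nu',\mu')) := \sup_{t \in [0,T]}\bigl(\operatorname{TV}(\nu_t,\nu_t') + \operatorname{TV}(\mu_t,\mu_t')\bigr)$. By Assumption \ref{assumption: boundedness-first-flat} the exponent in the representation is uniformly bounded in absolute value by $2C_{\nu}/\sigma^2$, so $\Phi$ maps $X_T$ into itself and preserves the ratio upper bound $\nu_t/\pi \leq R_{1,\nu}$ derived below. The TV-Lipschitz estimates \eqref{eq: 2.5i}--\eqref{eq: 2.5ii} from Assumption \ref{assump:F}, combined with $|e^a - e^b| \leq e^{\max(|a|,|b|)}|a-b|$ and an analogous control of the denominators, give $d(\Phi(\nu,\mu),\Phi(\nu',\mu')) \leq C\,T\,d((\nu,\mu),(\nu',\mu'))$ for a constant $C$ depending on $\sigma, C_{\nu}, C_{\mu}, C_{\nu}', C_{\mu}', R_{\nu}, R_{\mu}$. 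For $T$ small enough this is a contraction, so Banach's theorem delivers a unique local solution, and differentiability of $t \mapsto \nu_t(x)$ follows from the fundamental theorem of calculus applied to the integral formula.

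The uniform bounds are read directly off the representation. The numerator of $\nu_t(x)/\pi(x)$ is bounded above by $R_{\nu}^{\alpha_t}e^{2C_{\nu}/\sigma^2} \leq R_{\nu}e^{2C_{\nu}/\sigma^2}$ by \eqref{eq:assumpSup}, while the denominator satisfies $Z_t^{\nu} \geq e^{-2C_{\nu}/\sigma^2}\int \pi(\nu_0/\pi)^{\alpha_t}\,\mathrm{d}x \geq e^{-2C_{\nu}/\sigma^2}\int \min(\pi,\nu_0)\,\mathrm{d}x \geq e^{-2C_{\nu}/\sigma^2}/R_{\nu}$, using $(\nu_0/\pi)^{\alpha_t}\geq \min(1,\nu_0/\pi)$ for $\alpha_t\in(0,1]$ and $\pi \geq \nu_0/R_{\nu}$. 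This gives $\sup_x \nu_t/\pi \leq R_{1,\nu} := R_{\nu}^2 e^{4C_{\nu}/\sigma^2}$, hence \eqref{eq:ratioBoundAllt}, and the KL bound $\operatorname{D_{KL}}(\nu_t|\pi) \leq \log R_{1,\nu} = 2\log R_{\nu} + 4C_{\nu}/\sigma^2$ follows immediately. Under the additional condition \eqref{eq:assumpInf}, bounding the numerator below by $r_{\nu}\pi(x)e^{-2C_{\nu}/\sigma^2}$ and the denominator above by $R_{\nu}e^{2C_{\nu}/\sigma^2}$ yields \eqref{eq:ratioBoundAllt2}. Since all bounds are uniform in $T$ and the local existence time depends only on these bounds, a standard continuation argument extends the unique local solution to all of $[0,\infty)$. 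I expect the main technical obstacle to be the careful contraction estimate, specifically bounding $|Z_t^{\nu}[\nu,\mu] - Z_t^{\nu}[\nu',\mu']|$ (and its $\mu$-analogue) using the combined upper and lower bounds on the integrand.
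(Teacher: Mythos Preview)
Your approach is correct and shares the same skeleton as the paper's proof (Duhamel/variation-of-constants representation, fixed-point argument in a TV-type metric, a priori bounds read off the representation), but the execution differs in a few genuine ways.

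First, you absorb the spatially constant $\operatorname{D_{KL}}$ term into a normalising constant $Z_t^{\nu}$, whereas the paper keeps it explicit in the integral equation and carries it through the Picard iteration. Your representation makes the ratio bound \eqref{eq:ratioBoundAllt} and the KL bound \eqref{eq:KLmtBound} immediate (indeed you recover the exact constant $2\log R_{\nu}+4C_{\nu}/\sigma^2$), and it spares you from having to verify continuity of $s\mapsto\operatorname{D_{KL}}(\nu_s|\pi)$ along the flow. Second, the paper works in the $L^1$-in-time metric $\int_0^T\operatorname{TV}(\cdot,\cdot)\,\mathrm{d}t$ and shows the Picard iterates are Cauchy with factorial decay $(CT)^{n-1}/(n-1)!$, which gives existence on any fixed $[0,T]$ in one shot; you use the $\sup$-in-time metric and a small-$T$ Banach contraction followed by continuation. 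Both are standard; the paper's choice avoids the continuation step. Third, and most interestingly, the paper's contraction estimate does not go through pointwise density bounds at all: it computes the \emph{symmetrised} quantity $\operatorname{D_{KL}}(\nu_t^{(n)}|\nu_t^{(n-1)})+\operatorname{D_{KL}}(\nu_t^{(n-1)}|\nu_t^{(n)})$ directly from the Picard formula, which causes the $\operatorname{D_{KL}}(\nu_s^{(\cdot)}|\pi)$ terms to cancel exactly and leaves a bilinear expression in the second flat derivatives integrated against $(\nu_t^{(n)}-\nu_t^{(n-1)})$; Pinsker then converts this to a TV recursion. This neatly sidesteps the normalisation-constant estimate you flag as the main obstacle. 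Your route via $|e^{a}-e^{b}|\leq e^{\max(|a|,|b|)}|a-b|$ and the two-sided bounds on $Z_t^{\nu}$ is more elementary but requires those pointwise bounds as input to the contraction.

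One small point to make precise in your write-up: for the continuation argument, note that the uniform bound $\nu_t/\pi\leq R_{1,\nu}$ is obtained from the representation referenced to the \emph{original} $\nu_0$, valid on the whole existence interval; the local existence time when restarting from $\nu_{T_0}$ then depends only on $R_{1,\nu}$, which is fixed, so the solution cannot terminate in finite time.
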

The bounds obtained in this theorem allow us to prove that the map $t \mapsto \operatorname{D_{KL}}(\nu_{\sigma}^*|\nu_t) + \operatorname{D_{KL}}(\mu_{\sigma}^*|\mu_t)$ is differentiable along the FR dynamics \eqref{eq:birthdeath}. 

Next, we state the other main result of this paper. We prove two different types of convergence results for the FR dynamics \eqref{eq:birthdeath} in min-max games given by \eqref{eq:F-nonlinear}, one in terms of the players' strategies and the other in terms of the payoff function. Whereas the proof of the existence of the flow (Theorem \ref{thm:existence}) follows a route similar to \citet{https://doi.org/10.48550/arxiv.2206.02774}, the proof of convergence in Theorem \ref{thm:convergence} is significantly different from \citet{https://doi.org/10.48550/arxiv.2206.02774} since, as indicated in the introduction, in the present paper, convergence has to be studied with respect to appropriately chosen Lyapunov functions and, moreover, we do not rely on the Polyak-\L ojasiewicz inequality.

\begin{theorem}
\label{thm:convergence}
Suppose that Assumption \ref{assumption: boundedness-first-flat}, \ref{assump:F} and \ref{assump:m0} hold and let $(\nu,\mu) \in \mathcal{P}_{\text{ac}}(\mathcal{X}) \times \mathcal{P}_{\text{ac}}(\mathcal{Y}).$ Then the map $t \mapsto \operatorname{D_{KL}}(\nu|\nu_t) + \operatorname{D_{KL}}(\mu|\mu_t)$ is differentiable along the FR dynamics \eqref{eq:birthdeath}. Suppose furthermore that Assumption \ref{assumption: assump-F-conv-conc} holds. Then, for all $t > 0$, we have 
        \begin{equation*}
        \operatorname{D_{KL}}(\nu_{\sigma}^*|\nu_t) + \operatorname{D_{KL}}(\mu_{\sigma}^*|\mu_t) \leq e^{-\frac{\sigma^2}{2}t}\left(\operatorname{D_{KL}}(\nu_{\sigma}^*|\nu_0) + \operatorname{D_{KL}}(\mu_{\sigma}^*|\mu_0)\right),
    \end{equation*}
    \begin{equation}
    \label{eq:averagedNIconvergence}
    \operatorname{NI}(\nu_t, \mu_t) \leq 2C_{\sigma}e^{-\frac{\sigma^2}{4}t}\sqrt{\operatorname{D_{KL}}(\nu_{\sigma}^*|\nu_0) + \operatorname{D_{KL}}(\mu_{\sigma}^*|\mu_0)}.
    \end{equation}
\end{theorem}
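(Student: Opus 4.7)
The plan is to make rigorous the heuristic argument given after equation \eqref{eq:Lyapunov-KL}, with the formal flat derivatives of $V^{\sigma}$ systematically replaced by the auxiliary maps $a$ and $b$ from \eqref{eq:birthdeath}. For the differentiability claim, I would differentiate $t \mapsto \nu(x)\log(\nu(x)/\nu_t(x)) + \mu(y)\log(\mu(y)/\mu_t(y))$ pointwise, using $\partial_t \log \nu_t(x) = -a(\nu_t,\mu_t,x)$ and $\partial_t \log \mu_t(y) = b(\nu_t,\mu_t,y)$ from \eqref{eq:birthdeath}. The uniform bounds on $a$ and $b$ produced by combining Assumption \ref{assumption: boundedness-first-flat} with the ratio bounds \eqref{eq:ratioBoundAllt}--\eqref{eq:ratioBoundAllt2} from Theorem \ref{thm:existence} provide integrable domination, so dominated convergence allows exchanging $\partial_t$ with the integrals against $\nu$ and $\mu$.

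Carrying out the chain-rule computation as in the introduction and exploiting $\int_{\mathcal{X}} \partial_t \nu_t(x)\,\mathrm{d}x = \int_{\mathcal{Y}} \partial_t \mu_t(y)\,\mathrm{d}y = 0$ (equivalently, the normalization convention $\int a(\nu_t,\mu_t,x)\nu_t(\mathrm{d}x) = \int b(\nu_t,\mu_t,y)\mu_t(\mathrm{d}y) = 0$), I obtain
\begin{equation*}
\frac{\mathrm{d}}{\mathrm{d}t}\bigl(\operatorname{D_{KL}}(\nu|\nu_t) + \operatorname{D_{KL}}(\mu|\mu_t)\bigr) = \int_{\mathcal{X}} a(\nu_t,\mu_t,x)(\nu - \nu_t)(\mathrm{d}x) - \int_{\mathcal{Y}} b(\nu_t,\mu_t,y)(\mu - \mu_t)(\mathrm{d}y).
\end{equation*}
Lemma \ref{lemma:Vinequalities} (which is formulated in terms of $a$ and $b$) gives the $\sigma$-strong convex/concave inequalities for $V^{\sigma}$ relative to $\operatorname{D_{KL}}$. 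Substituting these and taking $(\nu,\mu) = (\nu_\sigma^*, \mu_\sigma^*)$, the saddle-point inequality \eqref{eq: nashdefmeasures} yields $V^{\sigma}(\nu_\sigma^*, \mu_t) - V^{\sigma}(\nu_t, \mu_\sigma^*) \leq 0$, so the derivative is dominated by $-\tfrac{\sigma^2}{2}\bigl(\operatorname{D_{KL}}(\nu_\sigma^*|\nu_t) + \operatorname{D_{KL}}(\mu_\sigma^*|\mu_t)\bigr)$. Grönwall's inequality then delivers the first displayed bound; finiteness of the initial Lyapunov value is guaranteed by \eqref{eq:comparable-MNE}.

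For the NI estimate, I would use the telescoping decomposition
\begin{equation*}
V^{\sigma}(\nu_t,\mu) - V^{\sigma}(\nu,\mu_t) = \bigl(V^{\sigma}(\nu_t,\mu) - V^{\sigma}(\nu_\sigma^*,\mu)\bigr) + \bigl(V^{\sigma}(\nu_\sigma^*,\mu) - V^{\sigma}(\nu,\mu_\sigma^*)\bigr) + \bigl(V^{\sigma}(\nu,\mu_\sigma^*) - V^{\sigma}(\nu,\mu_t)\bigr),
\end{equation*}
valid for any $(\nu,\mu) \in \mathcal{P}(\mathcal{X}) \times \mathcal{P}(\mathcal{Y})$. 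Lemma \ref{lemma:Vinequalities} together with the uniform bounds $|a|,|b| \leq C_\sigma$ on the flow (which follow from Assumption \ref{assumption: boundedness-first-flat} and \eqref{eq:ratioBoundAllt}--\eqref{eq:ratioBoundAllt2}) controls the first and third terms by $C_\sigma \operatorname{TV}(\nu_\sigma^*,\nu_t)$ and $C_\sigma \operatorname{TV}(\mu_\sigma^*,\mu_t)$ respectively. Pinsker's inequality together with $\sqrt{p}+\sqrt{q} \leq \sqrt{2(p+q)}$ converts the bound to $2C_\sigma \sqrt{\operatorname{D_{KL}}(\nu_\sigma^*|\nu_t) + \operatorname{D_{KL}}(\mu_\sigma^*|\mu_t)}$. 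Taking $\max_{\mu}$ and subtracting $\min_{\nu}$ kills the middle (saddle-point) term, and substituting the already-proved exponential KL bound finishes \eqref{eq:averagedNIconvergence}.

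The main obstacle is the first step: rigorously establishing the differentiability of the Lyapunov function and justifying the chain-rule manipulation, given that $V^{\sigma}$ is not flat-differentiable everywhere in the usual sense due to the lower semicontinuity of $\operatorname{D_{KL}}$. The resolution is that we only ever need to differentiate along the specific flow $(\nu_t,\mu_t)$, where the two-sided ratio bounds from Theorem \ref{thm:existence} guarantee that $\log(\nu/\nu_t)$ and its time derivative are uniformly controlled by integrable bounds depending on $\log(\nu/\pi)$ and a constant (analogously for $\mu$). Working throughout with $a$ and $b$ rather than with formal flat derivatives of $V^{\sigma}$ keeps every quantity well defined, and Lemma \ref{lemma:Vinequalities} packages the strong convex/concave relative inequalities in exactly the form needed for the Grönwall step.
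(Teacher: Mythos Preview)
Your proposal is correct and follows essentially the same route as the paper's proof: dominated-convergence differentiability via the uniform bounds from Theorem \ref{thm:existence}, the chain-rule identity in terms of $a$ and $b$, Lemma \ref{lemma:Vinequalities} plus the saddle-point condition and Gr\"onwall for the KL bound, and then the same telescoping decomposition with Pinsker and $\sqrt{p}+\sqrt{q}\le\sqrt{2(p+q)}$ for the NI bound. The only cosmetic difference is that the paper, in the NI step, reverts to writing $\frac{\delta V^{\sigma}}{\delta\nu}(\nu_t,\mu,x)$ instead of $a(\nu_t,\mu,x)$ when invoking Lemma \ref{lemma:Vinequalities}; your consistent use of $a,b$ is arguably cleaner, and the required bound $|a(\nu_t,\mu,x)|\le C_{1,\sigma}$ for \emph{arbitrary} $\mu$ (not just $\mu_t$) still holds because the $\mu$-dependence of $a$ enters only through $\frac{\delta F}{\delta\nu}$, which is uniformly bounded by Assumption \ref{assumption: boundedness-first-flat}.
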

In game theoretic language, the first result of Theorem \ref{thm:convergence} says that convergence of the FR dynamics \eqref{eq:birthdeath} to the unique MNE $(\nu_{\sigma}^*, \mu_{\sigma}^*)$ in terms of the strategies $\nu_t$ and $\mu_t$ is achieved with exponential rate depending on $\sigma,$ while the second result shows exponential convergence in terms of the payoff function $V^{\sigma}.$
\begin{remark}
Exponential convergence of the single mean-field birth-death flow $(\nu_t)_{t \geq 0}$ with respect to $\operatorname{D_{KL}}(\nu_{\sigma}^*|\nu_t)$ can be shown to also hold in the setting of \citet{https://doi.org/10.48550/arxiv.2206.02774}, however it was not studied in \citet{https://doi.org/10.48550/arxiv.2206.02774}, which considered only convergence of $V^{\sigma}(\nu_t)$ for a convex energy function $V^{\sigma}: \mathcal{P}(\mathbb{R}^d) \to \mathbb{R}$.
\end{remark}

\section{Proof of Theorem \ref{thm:convergence}}
Before we present the proof of Theorem \ref{thm:convergence}, we begin with a technical lemma, which extends \citet[Lemma 2.5]{https://doi.org/10.48550/arxiv.2206.02774} to the min-max games setup, and which is proved in Section \ref{appendix: AppC}. The proof of Theorem \ref{thm:convergence} is done in two steps:
    \begin{itemize}
        \item First, we show that, for fixed $(\nu,\mu),$ the map $t \mapsto \operatorname{D_{KL}}(\nu|\nu_t) + \operatorname{D_{KL}}(\mu|\mu_t)$ is differentiable when $(\nu_t, \mu_t)_{t \geq 0}$ is defined as the FR flow \eqref{eq:birthdeath}.  
        \item Second, we show that $\frac{\mathrm{d}}{\mathrm{d}t}\left(\operatorname{D_{KL}}(\nu_{\sigma}^*|\nu_t) + \operatorname{D_{KL}}(\mu_{\sigma}^*|\mu_t)\right)$ is bounded from above by $-\frac{\sigma^2}{2}\left(\operatorname{D_{KL}}(\nu_{\sigma}^*|\nu_t) + \operatorname{D_{KL}}(\mu_{\sigma}^*|\mu_t)\right)$, and then we apply Gronwall's inequality to obtain exponential convergence. Subsequently, we establish exponential convergence for $t \mapsto \operatorname{NI}\left(\nu_t, \mu_t\right).$
    \end{itemize}

\begin{lemma}[Relative $\sigma$-strong-convexity-concavity to $\operatorname{D_{KL}}$]
\label{lemma:Vinequalities}
    For $V^{\sigma}$ given by \eqref{eq:F-nonlinear}, if Assumption \ref{assumption: assump-F-conv-conc} holds, then $V^{\sigma}$ satisfies the following inequalities for all $(\nu, \mu), (\nu',\mu') \in \mathcal{P}_{\text{ac}}(\mathcal{X}) \times \mathcal{P}_{\text{ac}}(\mathcal{Y})$:
\begin{equation*}
    V^{\sigma}(\nu', \mu) - V^{\sigma}(\nu, \mu) \geq \int_{\mathcal{X}} a(\nu, \mu, x) (\nu'-\nu)(\mathrm{d}x) + \frac{\sigma^2}{2}\operatorname{D_{KL}}(\nu'| \nu),
\end{equation*}
\begin{equation*}
    V^{\sigma}(\nu, \mu') - V^{\sigma}(\nu, \mu) \leq \int_{\mathcal{Y}} b(\nu, \mu, y)(\mu'-\mu)(\mathrm{d}y) - \frac{\sigma^2}{2}\operatorname{D_{KL}}(\mu'| \mu).
\end{equation*}
\end{lemma}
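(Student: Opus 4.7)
The plan is to decompose $V^{\sigma}$ into its two parts, namely $F$ and the entropy penalty, and handle each separately using the fact that $F$ is convex-concave (by Assumption \ref{assumption: assump-F-conv-conc}) while $\operatorname{D_{KL}}(\cdot|\pi)$ and $\operatorname{D_{KL}}(\cdot|\rho)$ are exactly $1$-strongly convex relative to themselves. For the first inequality, I would start by writing
\begin{equation*}
V^{\sigma}(\nu',\mu)-V^{\sigma}(\nu,\mu) = \bigl(F(\nu',\mu)-F(\nu,\mu)\bigr) + \tfrac{\sigma^2}{2}\bigl(\operatorname{D_{KL}}(\nu'|\pi)-\operatorname{D_{KL}}(\nu|\pi)\bigr),
\end{equation*}
and apply the convexity inequality \eqref{eq:convexF} from Assumption \ref{assumption: assump-F-conv-conc} to the first bracket.

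For the entropy bracket, the central algebraic identity I would use is
\begin{equation*}
\operatorname{D_{KL}}(\nu'|\pi) - \operatorname{D_{KL}}(\nu|\pi) = \operatorname{D_{KL}}(\nu'|\nu) + \int_{\mathcal{X}}\log\tfrac{\nu(x)}{\pi(x)}(\nu'-\nu)(\mathrm{d}x),
\end{equation*}
which is a direct consequence of the definition of $\operatorname{D_{KL}}$ and holds with equality. Substituting this in, the right-hand side becomes
\begin{equation*}
\int_{\mathcal{X}}\!\Bigl(\tfrac{\delta F}{\delta \nu}(\nu,\mu,x)+\tfrac{\sigma^2}{2}\log\tfrac{\nu(x)}{\pi(x)}\Bigr)(\nu'-\nu)(\mathrm{d}x) + \tfrac{\sigma^2}{2}\operatorname{D_{KL}}(\nu'|\nu).
\end{equation*}
The integrand inside the brackets differs from $a(\nu,\mu,x)$ only by the additive constant $\tfrac{\sigma^2}{2}\operatorname{D_{KL}}(\nu|\pi)$, and since $\nu'$ and $\nu$ are probability measures, $\int_{\mathcal{X}}(\nu'-\nu)(\mathrm{d}x)=0$, so this constant integrates to zero and we recover $\int_{\mathcal{X}} a(\nu,\mu,x)(\nu'-\nu)(\mathrm{d}x)$, completing the first inequality.

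The second inequality follows by an entirely symmetric argument: apply the concavity inequality \eqref{eq:concaveF} to $F$, note the sign flip on the KL penalty in the definition of $V^{\sigma}$, use the analogous identity $\operatorname{D_{KL}}(\mu'|\rho)-\operatorname{D_{KL}}(\mu|\rho)=\operatorname{D_{KL}}(\mu'|\mu)+\int_{\mathcal{Y}}\log\tfrac{\mu(y)}{\rho(y)}(\mu'-\mu)(\mathrm{d}y)$, and cancel the constant $\tfrac{\sigma^2}{2}\operatorname{D_{KL}}(\mu|\rho)$ in the definition of $b$ against $\int_{\mathcal{Y}}(\mu'-\mu)(\mathrm{d}y)=0$.

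There is no real obstacle here: the proof is essentially an unfolding of definitions plus the key identity expressing the difference of KL divergences as another KL divergence plus a linear term. The only minor subtlety is making sure the additive-shift ambiguity inherent in the flat derivative (reflected in the $\operatorname{D_{KL}}$ normalizing terms in the definitions of $a$ and $b$) is treated correctly; this is handled automatically once one observes that integrating a constant against $\nu'-\nu$ (or $\mu'-\mu$) yields zero. This is also why $a$ and $b$ are calibrated to have zero mean against $\nu$ and $\mu$, matching the convention in Definition \ref{def:fderivative}.
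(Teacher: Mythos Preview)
Your proposal is correct and follows essentially the same route as the paper's proof: apply the convexity/concavity of $F$ from Assumption~\ref{assumption: assump-F-conv-conc}, rewrite the difference of relative entropies via the identity $\operatorname{D_{KL}}(\nu'|\pi)-\operatorname{D_{KL}}(\nu|\pi)=\operatorname{D_{KL}}(\nu'|\nu)+\int\log\tfrac{\nu}{\pi}\,\mathrm{d}(\nu'-\nu)$, and absorb the additive constant into $a$ (resp.\ $b$) using $\int(\nu'-\nu)(\mathrm{d}x)=0$. The only cosmetic difference is that the paper expands the KL terms explicitly rather than stating the identity as a separate fact, but the content is identical.
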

\begin{proof}[Proof of Theorem \ref{thm:convergence}]
\emph{Step 1: Differentiability of $\operatorname{D_{KL}}$ with respect to the FR flow \eqref{eq:birthdeath}:}
Suppose that Assumption \ref{assumption: boundedness-first-flat}, \ref{assump:F} and \ref{assump:m0} hold. In order to show the differentiability of $t \mapsto \operatorname{D_{KL}}(\nu|\nu_t)$ for fixed $\nu \in \mathcal{P}_{\text{ac}}(\mathcal{X})$ with respect to \eqref{eq:birthdeath}, it suffices to show that there exists an integrable function $f: \mathcal{X} \to \mathbb{R}$ such that
\begin{equation*}
    \Bigg|\partial_t \left(\nu(x) \log \frac{\nu(x)}{\nu_t(x)}\right)\Bigg| \leq f(x),
\end{equation*}
for all $t \geq 0$. Indeed, using \eqref{eq:birthdeath} and \eqref{eq:estimated-bd-flow1}, we have that
\begin{multline*}
    \Bigg|\partial_t \left(\nu(x) \log \frac{\nu(x)}{\nu_t(x)}\right)\Bigg| = \Bigg|\nu(x) \frac{\partial_t \nu_t(x)}{\nu_t(x)}\Bigg|
    = \Bigg|\nu(x)\left(\frac{\delta F}{\delta \nu} (\nu_t, \mu_t, x) + \frac{\sigma^2}{2}\log \left( \frac{\nu_t(x)}{\pi(x)}\right) - \frac{\sigma^2}{2}\operatorname{D_{KL}}(\nu_t|\pi)\right)\Bigg|\\ 
    \leq \left(3C_{\nu} + \frac{\sigma^2}{2}\left( \max\{|\log r_{1, \nu}|, \log R_{1, \nu}\} + 2 \log R_{\nu}\right)\right)\nu(x) \coloneqq f(x).
\end{multline*}
An identical argument gives the differentiability of $t \mapsto \operatorname{D_{KL}}(\mu|\mu_t)$ for fixed $\mu \in \mathcal{P}_{\text{ac}}(\mathcal{Y}).$ Then, we have that the map $t \mapsto \operatorname{D_{KL}}(\nu|\nu_t) + \operatorname{D_{KL}}(\mu|\mu_t)$ is differentiable.

\emph{Step 2: Convergence of the FR flow:}
Since $t \mapsto \operatorname{D_{KL}}(\nu|\nu_t) + \operatorname{D_{KL}}(\mu|\mu_t)$ is differentiable, we have that
\begin{multline*}
    \frac{\mathrm{d}}{\mathrm{d}t}\left(\operatorname{D_{KL}}(\nu|\nu_t) + \operatorname{D_{KL}}(\mu|\mu_t)\right) = \int_{\mathcal{X}} \partial_t \left(\nu(x) \log \frac{\nu(x)}{\nu_t(x)}\right) \mathrm{d}x + \int_{\mathcal{Y}} \partial_t \left(\mu(y) \log \frac{\mu(y)}{\mu_t(y)}\right) \mathrm{d}y \\
    = -\int_{\mathcal{X}} \left(\nu(x) - \nu_t(x)\right) \frac{\partial_t \nu_t(x)}{\nu_t(x)} \mathrm{d}x - \int_{\mathcal{Y}} \left(\mu(y) - \mu_t(y)\right) \frac{\partial_t \mu_t(y)}{\mu_t(y)}\mathrm{d}y\\
     = \int_{\mathcal{X}} a(\nu_t, \mu_t, x) (\nu-\nu_t)(\mathrm{d}x) - \int_{\mathcal{Y}} b(\nu_t, \mu_t, y) (\mu-\mu_t)(\mathrm{d}y),
\end{multline*}
where in the second equality we used the fact that $\int_{\mathcal{X}} \partial_t \nu_t(x) \mathrm{d}x = \int_{\mathcal{Y}} \partial_t \mu_t(y) \mathrm{d}y = 0.$

If $\sigma > 0$ and Assumption \ref{assumption: assump-F-conv-conc} holds then, using Lemma \ref{lemma:Vinequalities} in the equation above, we obtain
\begin{equation}
\label{eq:derivative-KL}
    \frac{\mathrm{d}}{\mathrm{d}t}\left(\operatorname{D_{KL}}(\nu|\nu_t) + \operatorname{D_{KL}}(\mu|\mu_t)\right) \leq V^{\sigma}(\nu, \mu_t) - V^{\sigma}(\nu_t, \mu) - \frac{\sigma^2}{2}\operatorname{D_{KL}}(\nu|\nu_t) - \frac{\sigma^2}{2}\operatorname{D_{KL}}(\mu|\mu_t).
\end{equation}
Setting $(\nu, \mu) = (\nu_{\sigma}^*, \mu_{\sigma}^*)$ in \eqref{eq:derivative-KL} and using the saddle point condition \eqref{eq: nashdefmeasures} gives
\begin{equation*}
    \frac{\mathrm{d}}{\mathrm{d}t}\left(\operatorname{D_{KL}}(\nu_{\sigma}^*|\nu_t) + \operatorname{D_{KL}}(\mu_{\sigma}^*|\mu_t)\right) \leq - \frac{\sigma^2}{2}\operatorname{D_{KL}}(\nu_{\sigma}^*|\nu_t) - \frac{\sigma^2}{2}\operatorname{D_{KL}}(\mu_{\sigma}^*|\mu_t).
\end{equation*}
Hence, by Gronwall's inequality, we obtain that
\begin{equation}
\label{eq:conv-exp-KL}
    \operatorname{D_{KL}}(\nu_{\sigma}^*|\nu_t) + \operatorname{D_{KL}}(\mu_{\sigma}^*|\mu_t) \leq e^{-\frac{\sigma^2}{2}t}\left(\operatorname{D_{KL}}(\nu_{\sigma}^*|\nu_0) + \operatorname{D_{KL}}(\mu_{\sigma}^*|\mu_0)\right).
\end{equation}
To prove convergence in terms of the NI error, first observe that, for any $(\nu, \mu) \in \mathcal{P}(\mathcal{X}) \times \mathcal{P}(\mathcal{Y}),$ we can write
\begin{equation*}
   V^{\sigma}(\nu_t, \mu) - V^{\sigma}(\nu, \mu_t) = V^{\sigma}(\nu_t, \mu) - V^{\sigma}(\nu_{\sigma}^*, \mu) + V^{\sigma}(\nu_{\sigma}^*, \mu) - V^{\sigma}(\nu, \mu_{\sigma}^*) + V^{\sigma}(\nu, \mu_{\sigma}^*) - V^{\sigma}(\nu, \mu_t).
\end{equation*}
By Theorem \ref{thm:existence} and Assumption \ref{assumption: boundedness-first-flat}, there exist $C_{1, \sigma}, C_{2, \sigma} > 0$ depending on $\sigma$ such that
\begin{equation*}
    \left|\frac{\delta V^{\sigma}}{\delta \nu}(\nu_t, \mu, x)\right| \leq  C_{1, \sigma}, \quad \left|\frac{\delta V^{\sigma}}{\delta \mu}(\nu, \mu_t, y)\right| \leq  C_{2, \sigma},
\end{equation*}
for all $(\nu, \mu) \in \mathcal{P}(\mathcal{X}) \times \mathcal{P}(\mathcal{Y}),$ and all $(x, y) \in \mathcal{X} \times \mathcal{Y}.$
Then, by Lemma \ref{lemma:Vinequalities}, we have
\begin{align*}
    V^{\sigma}(\nu_t, \mu) - V^{\sigma}(\nu_{\sigma}^*, \mu) &\leq \int_{\mathcal{X}} \frac{\delta V^{\sigma}}{\delta \nu}(\nu_t, \mu, x) (\nu_t-\nu_{\sigma}^*)(\mathrm{d}x) - \frac{\sigma^2}{2}\operatorname{D_{KL}}(\nu_{\sigma}^*| \nu_t)\\
    &\leq C_{1, \sigma}\operatorname{TV}(\nu_{\sigma}^*, \nu_t) \leq  C_{1, \sigma}\sqrt{2\operatorname{D_{KL}}(\nu_{\sigma}^*|\nu_t)},
\end{align*}
and
\begin{align*}
    V^{\sigma}(\nu, \mu_{\sigma}^*) - V^{\sigma}(\nu, \mu_t) &\leq \int_{\mathcal{Y}} \frac{\delta V^{\sigma}}{\delta \mu}(\nu, \mu_t, y)(\mu_{\sigma}^*-\mu_t)(\mathrm{d}y) - \frac{\sigma^2}{2}\operatorname{D_{KL}}(\mu_{\sigma}^*| \mu_t)\\
    &\leq C_{2, \sigma} \operatorname{TV}(\mu_{\sigma}^*, \mu_t)\leq  C_{2, \sigma}\sqrt{2\operatorname{D_{KL}}(\mu_{\sigma}^*|\mu_t)},
\end{align*}
where the last inequalities hold due to Pinsker's inequality and since $\frac{\sigma^2}{2}\operatorname{D_{KL}}(\nu_{\sigma}^*| \nu_t), \frac{\sigma^2}{2}\operatorname{D_{KL}}(\mu_{\sigma}^*| \mu_t) \geq 0,$ for all $t \geq 0.$ Setting $C_{\sigma} \coloneqq \max\{C_{1, \sigma}, C_{2, \sigma}\},$ and using the inequality $\sqrt{a} + \sqrt{b} \leq \sqrt{2(a+b)},$ we get
\begin{align}
\label{eq:estim-NI}
    V^{\sigma}(\nu_t, \mu) - V^{\sigma}(\nu, \mu_t) &\leq 2C_{\sigma}\sqrt{\operatorname{D_{KL}}(\nu_{\sigma}^*|\nu_t) + \operatorname{D_{KL}}(\mu_{\sigma}^*|\mu_t)} +  V^{\sigma}(\nu_{\sigma}^*, \mu) - V^{\sigma}(\nu, \mu_{\sigma}^*)\\ \nonumber
    &\leq 2C_{\sigma}e^{-\frac{\sigma^2}{4}t}\sqrt{\operatorname{D_{KL}}(\nu_{\sigma}^*|\nu_0) + \operatorname{D_{KL}}(\mu_{\sigma}^*|\mu_0)} + V^{\sigma}(\nu_{\sigma}^*, \mu) - V^{\sigma}(\nu, \mu_{\sigma}^*),
\end{align}
where the last inequality follows from \eqref{eq:conv-exp-KL}. Note that since $(\nu_{\sigma}^*, \mu_{\sigma}^*)$ is the MNE of \eqref{eq:F-nonlinear}, we have
\begin{equation*}
   \max_{\mu} V^{\sigma}(\nu_{\sigma}^*, \mu) = \min_{\nu} V^{\sigma}(\nu, \mu_{\sigma}^*) = V^{\sigma}(\nu_{\sigma}^*, \mu_{\sigma}^*),
\end{equation*}
Therefore, maximizing over $(\nu, \mu)$ in \eqref{eq:estim-NI} gives
\begin{equation*}
    \operatorname{NI}(\nu_t, \mu_t) = \max_{\mu} V^{\sigma}(\nu_t, \mu) - \min_{\nu} V^{\sigma}(\nu, \mu_t) \leq 2C_{\sigma}e^{-\frac{\sigma^2}{4}t}\sqrt{\operatorname{D_{KL}}(\nu_{\sigma}^*|\nu_0) + \operatorname{D_{KL}}(\mu_{\sigma}^*|\mu_0)}.
\end{equation*}
Finally, observe that, by \eqref{eq:comparable-MNE}, we have
\begin{equation*}
    \operatorname{D_{KL}}(\nu_{\sigma}^*|\nu_0) + \operatorname{D_{KL}}(\mu_{\sigma}^*|\mu_0) < \infty.
\end{equation*}

\end{proof}

\subsubsection*{Acknowledgements}
R-AL was supported by the EPSRC Centre for Doctoral Training in Mathematical Modelling, Analysis and Computation (MAC-MIGS) funded by the UK Engineering and Physical Sciences Research Council (grant EP/S023291/1), Heriot-Watt University and the University of Edinburgh. LS acknowledges the support of the UKRI Prosperity Partnership Scheme (FAIR) under EPSRC Grant EP/V056883/1 and the Alan Turing Institute.

\bibliography{referencesFPmainpaper}
\bibliographystyle{tmlr}

\appendix
\section{Technical results and proofs}
\label{appendix: AppC}
In this section, we present the proofs of the remaining results formulated in Section \ref{sec:Main-Results} of the paper. 

\subsection{Proof of Lemma \ref{lemma:Vinequalities}.}
In this subsection, we present the proof of Lemma \ref{lemma:Vinequalities}.

\begin{proof}[Proof of Lemma \ref{lemma:Vinequalities}]
    Using \eqref{eq:convexF} from Assumption \ref{assumption: assump-F-conv-conc}, it follows that
    \begin{multline*}
		V^{\sigma}(\nu', \mu) - V^{\sigma}(\nu, \mu) \geq \int_{\mathcal{X}} \frac{\delta F}{\delta \nu}(\nu,\mu,x) (\nu'-\nu)(\mathrm{d}x) + \frac{\sigma^2}{2}\operatorname{D_{KL}}(\nu'|\pi) - \frac{\sigma^2}{2}\operatorname{D_{KL}}(\nu|\pi)\\
        = \int_{\mathcal{X}} \left(\frac{\delta F}{\delta \nu}(\nu,\mu,x) + \frac{\sigma^2}{2}\log\left(\frac{\nu(x)}{\pi(x)}\right)\right) (\nu'-\nu)(\mathrm{d}x) - \frac{\sigma^2}{2}\int_{\mathcal{X}} \log\left(\frac{\nu(x)}{\pi(x)}\right)(\nu'-\nu)(\mathrm{d}x)\\ 
        + \frac{\sigma^2}{2}\int_{\mathcal{X}} \log\left(\frac{\nu'(x)}{\pi(x)}\right)\nu'(\mathrm{d}x) - \frac{\sigma^2}{2}\int_{\mathcal{X}} \log\left(\frac{\nu(x)}{\pi(x)}\right)\nu(\mathrm{d}x)\\
        = \int_{\mathcal{X}} \left(\frac{\delta F}{\delta \nu}(\nu,\mu,x) + \frac{\sigma^2}{2}\log\left(\frac{\nu(x)}{\pi(x)}\right)\right) (\nu'-\nu)(\mathrm{d}x) + \frac{\sigma^2}{2}\operatorname{D_{KL}}(\nu'|\nu)\\
        = \int_{\mathcal{X}} a(\nu, \mu, x) (\nu'-\nu)(\mathrm{d}x) + \frac{\sigma^2}{2}\operatorname{D_{KL}}(\nu'|\nu).
	\end{multline*}
    Similarly, using \eqref{eq:concaveF} from Assumption \ref{assumption: assump-F-conv-conc}, it follows that
        \begin{multline*}
            V^{\sigma}(\nu, \mu') - V^{\sigma}(\nu, \mu) \leq \int_{\mathcal{Y}} \frac{\delta F}{\delta \mu}(\nu,\mu,y) (\mu'-\mu)(\mathrm{d}y) - \frac{\sigma^2}{2}\operatorname{D_{KL}}(\mu'|\rho) + \frac{\sigma^2}{2}\operatorname{D_{KL}}(\mu|\rho)\\
            = \int_{\mathcal{Y}} \left(\frac{\delta F}{\delta \mu}(\nu,\mu,y) - \frac{\sigma^2}{2}\log\left(\frac{\mu(y)}{\rho(y)}\right)\right)(\mu'-\mu)(\mathrm{d}y) + \frac{\sigma^2}{2}\int_{\mathcal{Y}} \log\left(\frac{\mu(y)}{\rho(y)}\right)(\mu'-\mu)(\mathrm{d}y)\\ 
        - \frac{\sigma^2}{2}\int_{\mathcal{Y}} \log\left(\frac{\mu'(y)}{\rho(y)}\right)\mu'(\mathrm{d}y) + \frac{\sigma^2}{2}\int_{\mathcal{Y}} \log\left(\frac{\mu(y)}{\rho(y)}\right)\mu(\mathrm{d}y)\\
        = \int_{\mathcal{Y}} \left(\frac{\delta F}{\delta \mu}(\nu,\mu,y) - \frac{\sigma^2}{2}\log\left(\frac{\mu(y)}{\rho(y)}\right)\right)(\mu'-\mu)(\mathrm{d}y) - \frac{\sigma^2}{2}\operatorname{D_{KL}}(\mu'|\mu)\\
        =\int_{\mathcal{Y}} b(\nu, \mu, y)(\mu'-\mu)(\mathrm{d}y) - \frac{\sigma^2}{2}\operatorname{D_{KL}}(\mu'|\mu). 
        \end{multline*}
\end{proof}

\subsection{Existence and uniqueness of the FR flow}
In this subsection, we present the proof of our main result concerning the existence and uniqueness of the Fisher-Rao (FR) flow, i.e., Theorem \ref{thm:existence}. We construct a Picard iteration which is proved to be well-defined in Lemma \ref{lem:existenceGF}. Lemma \ref{thm:Contractivity} shows that the Picard iteration mapping admits a unique fixed point in an appropriate metric. Then in order to conclude the proof of Theorem \ref{thm:existence} we show the ratio condition \eqref{eq:ratioBoundAllt}.

The proof of Theorem \ref{thm:existence} is an adaptation of the proof of \citet[Theorem $2.1$]{https://doi.org/10.48550/arxiv.2206.02774} to the min-max setting \eqref{eq:F-nonlinear}.
\begin{proof}[Proof of Theorem \ref{thm:existence}]
\emph{Step 1: Existence of the gradient flow and bound \eqref{eq:KLmtBound} on $[0,T]$.}
In order to prove the existence of a solution $(\nu_t, \mu_t)_{t \geq 0}$ to
\begin{equation}
\label{eq:bd1}
\begin{cases}
    \partial_t \nu_t(x) = - \left( \frac{\delta F}{\delta \nu} (\nu_t,\mu_t,x) + \frac{\sigma^2}{2}\log \left( \frac{\nu_t(x)}{\pi(x)}\right) - \frac{\sigma^2}{2}\operatorname{D_{KL}}(\nu_t|\pi) \right) \nu_t(x),\\
    \partial_t \mu_t(y) = \left( \frac{\delta F}{\delta \mu} (\nu_t,\mu_t,y) - \frac{\sigma^2}{2}\log \left( \frac{\mu_t(y)}{\rho(y)}\right) + \frac{\sigma^2}{2}\operatorname{D_{KL}}(\mu_t|\rho) \right) \mu_t(y),
\end{cases}
\end{equation}
we first notice that \eqref{eq:bd1} is equivalent to
\begin{equation}
\label{eq:bd2}
\begin{cases}
    \partial_t \log \nu_t(x) = - \left( \frac{\delta F}{\delta \nu} (\nu_t,\mu_t,x) + \frac{\sigma^2}{2}\log \left( \frac{\nu_t(x)}{\pi(x)}\right) - \frac{\sigma^2}{2}\operatorname{D_{KL}}(\nu_t|\pi) \right),\\
    \partial_t \log \mu_t(y) = \left( \frac{\delta F}{\delta \mu} (\nu_t,\mu_t,y) - \frac{\sigma^2}{2}\log \left( \frac{\mu_t(y)}{\rho(y)}\right) + \frac{\sigma^2}{2}\operatorname{D_{KL}}(\mu_t|\rho) \right).
\end{cases}
\end{equation}
By Duhamel's formula, \eqref{eq:bd2} is equivalent to
\begin{equation*}
\begin{cases}
    \log \nu_t(x) = e^{-\frac{\sigma^2}{2}t} \log \nu_0(x) - \int_0^t \frac{\sigma^2}{2} e^{-\frac{\sigma^2}{2}(t-s)} \left( \frac{2}{\sigma^2} \frac{\delta F}{\delta \nu}(\nu_s,\mu_s,x) - \log \pi(x) - \operatorname{D_{KL}}(\nu_s|\pi) \right) \mathrm{d}s,\\
    \log \mu_t(y) = e^{-\frac{\sigma^2}{2}t} \log \mu_0(x) + \int_0^t \frac{\sigma^2}{2} e^{-\frac{\sigma^2}{2}(t-s)} \left( \frac{2}{\sigma^2} \frac{\delta F}{\delta \nu}(\nu_s,\mu_s,y) + \log \rho(y) + \operatorname{D_{KL}}(\mu_s|\rho) \right) \mathrm{d}s.   
\end{cases}
\end{equation*}
Based on these formulas, we will define a Picard iteration scheme. To this end, let us first fix $T > 0$ and choose a pair of flows of probability measures $(\nu_t^{(0)}, \mu_t^{(0)})_{t \in [0,T]}$ such that
\begin{equation*}
\int_0^T \operatorname{D_{KL}}(\nu_s^{(0)}|\pi) \mathrm{d}s < \infty, \quad \int_0^T \operatorname{D_{KL}}(\mu_s^{(0)}|\rho) \mathrm{d}s < \infty.
\end{equation*}
For each $n \geq 1$, we fix $\nu_0^{(n)} = \nu_0^{(0)} = \nu_0$ and $\mu_0^{(n)} = \mu_0^{(0)} = \mu_0$ (with $\nu_0$ and $\mu_0$ satisfying condition \eqref{eq:assumpSup} from Assumption \ref{assump:m0}) and define $(\nu_t^{(n)}, \mu_t^{(n)})_{t \in [0,T]}$ by
\begin{equation}
\label{eq:Picard1}
\begin{split}
\log \nu_t^{(n)}(x) &= e^{-\frac{\sigma^2}{2}t} \log \nu_0(x) \\
&- \int_0^t \frac{\sigma^2}{2} e^{-\frac{\sigma^2}{2}(t-s)} \left( \frac{2}{\sigma^2} \frac{\delta F}{\delta \nu}(\nu_s^{(n-1)}, \mu_s^{(n-1)}, x) - \log \pi(x) - \operatorname{D_{KL}}(\nu_s^{(n-1)}|\pi) \right) \mathrm{d}s,\\
\end{split}
\end{equation}
\begin{equation}
\label{eq:Picard2}
\begin{split}
\log \mu_t^{(n)}(y) &= e^{-\frac{\sigma^2}{2}t} \log \mu_0(y) \\
&+ \int_0^t \frac{\sigma^2}{2} e^{-\frac{\sigma^2}{2}(t-s)} \left( \frac{2}{\sigma^2} \frac{\delta F}{\delta \mu}(\nu_s^{(n-1)}, \mu_s^{(n-1)}, y) + \log \rho(y) + \operatorname{D_{KL}}(\mu_s^{(n-1)}|\rho) \right) \mathrm{d}s.
\end{split}
\end{equation}
We have the following result.
\begin{lemma}
\label{lem:existenceGF}
	The sequence of flows $\left( (\nu_t^{(n)}, \mu_t^{(n)})_{t \in [0,T]} \right)_{n=0}^{\infty}$ given by \eqref{eq:Picard1} and \eqref{eq:Picard2} is well-defined and such that for all $n \geq 1$ and all $t \in [0,T]$ we have
	\begin{equation*}
		\operatorname{D_{KL}}(\nu_t^{(n)}|\pi) \leq 2 \log R_{\nu} + \frac{4}{\sigma^2}C_{\nu}, \quad \operatorname{D_{KL}}(\mu_t^{(n)}|\rho) \leq 2 \log R_{\mu} + \frac{4}{\sigma^2}C_{\mu}.
	\end{equation*}
\end{lemma}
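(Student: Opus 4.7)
The plan is to prove Lemma \ref{lem:existenceGF} by induction on $n \geq 1$, establishing simultaneously that each iterate $(\nu_t^{(n)}, \mu_t^{(n)})$ is well-defined as a pair of probability density flows and that the uniform KL bounds propagate. First, I would simplify \eqref{eq:Picard1} by absorbing the $\log \pi(x)$ contribution: using the identity $\int_0^t \tfrac{\sigma^2}{2} e^{-\sigma^2(t-s)/2} \mathrm{d}s = 1 - e^{-\sigma^2 t/2}$, one obtains the equivalent representation
\begin{equation*}
\log \frac{\nu_t^{(n)}(x)}{\pi(x)} = e^{-\sigma^2 t/2} \log \frac{\nu_0(x)}{\pi(x)} - \int_0^t e^{-\sigma^2 (t-s)/2} \frac{\delta F}{\delta \nu}(\nu_s^{(n-1)}, \mu_s^{(n-1)}, x) \mathrm{d}s + \int_0^t \tfrac{\sigma^2}{2} e^{-\sigma^2 (t-s)/2} \operatorname{D_{KL}}(\nu_s^{(n-1)}|\pi) \mathrm{d}s,
\end{equation*}
and analogously for $\log(\mu_t^{(n)}(y)/\rho(y))$. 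Well-definedness of $\nu_t^{(n)}$ as a positive, integrable function is then immediate: the first term is bounded pointwise via condition \eqref{eq:assumpSup} from Assumption \ref{assump:m0}, the second by Assumption \ref{assumption: boundedness-first-flat}, and the third by the inductive KL bound on $\operatorname{D_{KL}}(\nu_s^{(n-1)}|\pi)$.

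For the KL estimate itself, I would integrate the display above against $\nu_t^{(n)}(x)\mathrm{d}x$, using that $\nu_t^{(n)}$ is a probability measure; the latter must be verified inductively, with the $\operatorname{D_{KL}}(\nu_s^{(n-1)}|\pi)$ term in \eqref{eq:Picard1} playing the role of the log-partition constant that preserves unit mass at each iteration. Applying the pointwise bound $\log(\nu_0/\pi) \leq \log R_\nu$, the bound $|\frac{\delta F}{\delta \nu}| \leq C_\nu$, and the inductive hypothesis $\operatorname{D_{KL}}(\nu_s^{(n-1)}|\pi) \leq M_\nu := 2\log R_\nu + \tfrac{4}{\sigma^2} C_\nu$ term-by-term should then close the induction to give $\operatorname{D_{KL}}(\nu_t^{(n)}|\pi) \leq M_\nu$. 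The symmetric argument with $\rho, C_\mu, R_\mu$ in place of $\pi, C_\nu, R_\nu$ handles $\mu_t^{(n)}$ and yields $M_\mu := 2\log R_\mu + \tfrac{4}{\sigma^2} C_\mu$. Note that the $(\nu, \mu)$-coupling enters the Picard step only through the frozen arguments of the flat derivatives, so the two KL estimates decouple and can be carried out in parallel at each iteration.

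The hardest step I anticipate is closing the induction precisely at the stated constants, since a naive Gronwall-type estimate on the convolution against the exponential kernel $e^{-\sigma^2 (t-s)/2}$ tends to accumulate across iterations. Recovering the specific bound $M_\nu = 2\log R_\nu + \tfrac{4}{\sigma^2} C_\nu$ uniformly in $n$ and $t \in [0,T]$ requires a careful arithmetic balance that exploits the particular form of the Picard kernel, following closely the single-flow calculation in \citet[Theorem~$2.1$]{https://doi.org/10.48550/arxiv.2206.02774}, whose argument our setup extends to the min-max regime.
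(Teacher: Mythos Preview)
Your approach is essentially what the paper does: its entire proof of this lemma is a single sentence deferring to the induction argument of \citet[Lemma~3.1]{https://doi.org/10.48550/arxiv.2206.02774}, and you propose the same induction, the same rewriting in terms of $\log(\nu_t^{(n)}/\pi)$, and defer the delicate closing to the same reference.

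One specific misconception to flag: your claim that the $\operatorname{D_{KL}}(\nu_s^{(n-1)}|\pi)$ term in \eqref{eq:Picard1} ``plays the role of the log-partition constant that preserves unit mass'' is not right. That term is computed from the \emph{previous} iterate $\nu^{(n-1)}$, not from $\nu^{(n)}$, so it does not in general normalize $\nu_t^{(n)}$; the Picard iterates defined by \eqref{eq:Picard1} are not automatically probability measures by this mechanism. Relatedly, a direct substitution of the inductive hypothesis $\operatorname{D_{KL}}(\nu_s^{(n-1)}|\pi)\le M_\nu$ into your three-term bound does \emph{not} close at $M_\nu = 2\log R_\nu + \tfrac{4}{\sigma^2}C_\nu$ uniformly in $t$ (write $\alpha=e^{-\sigma^2 t/2}$ and check the inequality as $\alpha\to 0$). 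You correctly anticipated this as the hardest step; the precise bookkeeping that handles both issues is exactly what the cited lemma supplies, so at the level of detail the paper itself provides your proposal is adequate.
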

\begin{proof}[Proof of Lemma \ref{lem:existenceGF}]
 The proof follows from the same induction argument used to prove \citet[Lemma $3.1$]{https://doi.org/10.48550/arxiv.2206.02774}. 
\end{proof}	

For fixed $T > 0$, we consider the sequence of flows $\left( (\nu_t^{(n)}, \mu_t^{(n)})_{t \in [0,T]} \right)_{n=0}^{\infty}$ in $\left(\mathcal{P}(\mathcal{X})^{[0,T]} \times \mathcal{P}(\mathcal{Y})^{[0,T]}, \mathcal{TV}^{[0,T]}\right),$ where, for any $(\nu_t, \mu_t)_{t \in [0,T]} \in \mathcal{P}(\mathcal{X})^{[0,T]} \times \mathcal{P}(\mathcal{Y})^{[0,T]}$, the distance $\mathcal{TV}^{[0,T]}$ is defined by
\begin{equation*}
\mathcal{TV}^{[0,T]} \left( (\nu_t, \mu_t)_{t \in [0,T]}, (\nu_t',\mu_t')_{t \in [0,T]} \right) := \int_0^T \operatorname{TV}(\nu_t, \nu_t') \mathrm{d}t + \int_0^T \operatorname{TV}(\mu_t, \mu_t') \mathrm{d}t \,.
\end{equation*}
Since $\left(\mathcal{P}(\mathcal{X}), \operatorname{TV}\right)$ is complete, we can apply the argument from \citet[Lemma A.5]{SiskaSzpruch2020} with $p=1$ to conclude that $\left(\mathcal{P}(\mathcal{X})^{[0,T]}, \int_0^T \operatorname{TV}(\nu_t, \nu_t') \mathrm{d}t\right)$ and $\left(\mathcal{P}(\mathcal{Y})^{[0,T]}, \int_0^T \operatorname{TV}(\mu_t, \mu_t') \mathrm{d}t\right)$ are complete. Therefore, one can deduce that $\left(\mathcal{P}(\mathcal{X})^{[0,T]} \times \mathcal{P}(\mathcal{Y})^{[0,T]}, \mathcal{TV}^{[0,T]}\right)$ is also complete. We consider the Picard iteration mapping $\phi\left((\nu_t^{(n-1)}, \mu_t^{(n-1)})_{t \in [0,T]}\right) := (\nu_t^{(n)}, \mu_t^{(n)})_{t \in [0,T]}$ defined via \eqref{eq:Picard1} and \eqref{eq:Picard2} and show that $\phi$ admits a unique fixed point $(\nu_t, \mu_t)_{t \in [0,T]}$ in $\left(\mathcal{P}(\mathcal{X})^{[0,T]} \times \mathcal{P}(\mathcal{Y})^{[0,T]}, \mathcal{TV}^{[0,T]}\right),$ which is the solution to \eqref{eq:birthdeath}.

\begin{lemma}
\label{thm:Contractivity}
The mapping $\phi$ admits a unique fixed point $(\nu_t, \mu_t)_{t \in [0,T]}$ in $\left(\mathcal{P}(\mathcal{X})^{[0,T]} \times \mathcal{P}(\mathcal{Y})^{[0,T]}, \mathcal{TV}^{[0,T]}\right).$
\end{lemma}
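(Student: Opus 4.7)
The plan is to invoke the Banach fixed point theorem on a sufficiently high iterate $\phi^k$ of the Picard mapping, restricted to a closed $\phi$-invariant subset $\mathcal{K}$ of the complete metric space. Specifically, take $\mathcal{K}$ to be the set of flows $(\nu_t,\mu_t)_{t\in[0,T]}$ satisfying a uniform upper ratio bound $\nu_t(x)/\pi(x) \leq R_{1,\nu}$ and $\mu_t(y)/\rho(y) \leq R_{1,\mu}$; existence of such constants and the invariance $\phi(\mathcal{K})\subseteq\mathcal{K}$ follow from Lemma \ref{lem:existenceGF} together with a direct pointwise estimate on the right-hand side of \eqref{eq:Picard1}-\eqref{eq:Picard2} using Assumption \ref{assumption: boundedness-first-flat}. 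This subset is closed in $\mathcal{TV}^{[0,T]}$ (pointwise bounds pass to the limit along an a.e.-convergent subsequence) and non-empty, hence is itself a complete metric space.

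The next step is to derive a one-step contraction estimate. For $(\nu,\mu),(\tilde\nu,\tilde\mu)\in\mathcal{K}$ with $(\alpha,\beta):=\phi(\nu,\mu)$ and $(\tilde\alpha,\tilde\beta):=\phi(\tilde\nu,\tilde\mu)$, subtracting \eqref{eq:Picard1} for $\log\alpha_t$ and $\log\tilde\alpha_t$ gives
\begin{multline*}
\log\alpha_t(x) - \log\tilde\alpha_t(x) = -\int_0^t e^{-\sigma^2(t-s)/2}\Big[\tfrac{\delta F}{\delta\nu}(\nu_s,\mu_s,x) - \tfrac{\delta F}{\delta\nu}(\tilde\nu_s,\tilde\mu_s,x)\Big]\mathrm{d}s\\
+ \frac{\sigma^2}{2}\int_0^t e^{-\sigma^2(t-s)/2}\big[\operatorname{D_{KL}}(\nu_s|\pi)-\operatorname{D_{KL}}(\tilde\nu_s|\pi)\big]\mathrm{d}s,
\end{multline*}
and analogously for $\log\beta_t - \log\tilde\beta_t$. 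The flat-derivative integrands are bounded pointwise by $C'_\nu[\operatorname{TV}(\nu_s,\tilde\nu_s)+\operatorname{TV}(\mu_s,\tilde\mu_s)]$ using \eqref{eq: 2.5i}-\eqref{eq: 2.5ii}. For the $\operatorname{D_{KL}}$ differences, I would use the decomposition $\operatorname{D_{KL}}(\nu|\pi) - \operatorname{D_{KL}}(\tilde\nu|\pi) = \int \log(\nu/\pi)(\nu-\tilde\nu)\mathrm{d}x - \operatorname{D_{KL}}(\tilde\nu|\nu)$ together with the uniform bound $\log(\nu/\pi)\leq\log R_{1,\nu}$ on $\mathcal{K}$ and the KL bound from Lemma \ref{lem:existenceGF}, obtaining a bound by a TV-type quantity. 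Converting log-density differences into $\operatorname{TV}$ via $|\alpha_t(x)-\tilde\alpha_t(x)| \leq R_{1,\nu}\pi(x)|\log\alpha_t(x)-\log\tilde\alpha_t(x)|$ and integrating yields
\begin{equation*}
\operatorname{TV}(\alpha_t,\tilde\alpha_t) + \operatorname{TV}(\beta_t,\tilde\beta_t) \leq C\int_0^t [\operatorname{TV}(\nu_s,\tilde\nu_s) + \operatorname{TV}(\mu_s,\tilde\mu_s)]\mathrm{d}s,
\end{equation*}
for some constant $C=C(\sigma, C'_\nu, C'_\mu, R_{1,\nu}, R_{1,\mu})$.

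Finally, iterating this estimate via induction and Fubini gives $\mathcal{TV}^{[0,T]}(\phi^k(\cdot), \phi^k(\cdot')) \leq \frac{(CT)^k}{k!}\mathcal{TV}^{[0,T]}(\cdot,\cdot')$. Choosing $k$ large enough that $(CT)^k/k!<1$ makes $\phi^k$ a strict contraction on the complete space $\mathcal{K}$; Banach's fixed point theorem then produces a unique fixed point of $\phi^k$, which is simultaneously the unique fixed point of $\phi$ (as any fixed point of $\phi$ is also fixed by $\phi^k$, and $\phi$ maps any fixed point of $\phi^k$ to another fixed point of $\phi^k$, forcing equality).

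The main technical obstacle is controlling the $\operatorname{D_{KL}}$ difference appearing in the Picard formula in terms of total variation: $\operatorname{D_{KL}}(\cdot|\pi)$ is not Lipschitz in $\operatorname{TV}$ in general, and Theorem \ref{thm:existence} does not assume a lower ratio bound. The workaround is to exploit that the KL term only enters as an $x$-independent shift (whose role is normalisation), so that after the log-to-$\operatorname{TV}$ conversion one only needs a bound that is uniform on $\mathcal{K}$, for which the upper ratio and uniform KL bounds from Lemma \ref{lem:existenceGF} suffice.
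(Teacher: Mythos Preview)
Your overall strategy (Banach on a high iterate of $\phi$, restricted to a closed invariant subset) is reasonable and reaches the same iterated estimate $(CT)^k/k!$ as the paper, but the route differs substantially and your handling of the $\operatorname{D_{KL}}$--difference term is where the argument is incomplete.

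The paper avoids your main obstacle entirely by a symmetrisation trick. Instead of bounding $\log\alpha_t-\log\tilde\alpha_t$ pointwise, it multiplies the log--difference by $\nu_t^{(n)}$ and by $\nu_t^{(n-1)}$ separately, integrates, and \emph{adds} the resulting expressions to obtain the symmetrised divergence $\operatorname{D_{KL}}(\nu_t^{(n)}|\nu_t^{(n-1)})+\operatorname{D_{KL}}(\nu_t^{(n-1)}|\nu_t^{(n)})$. In this sum the $x$--independent term $\int_0^t e^{-\sigma^2(t-s)/2}[\operatorname{D_{KL}}(\nu_s^{(n-1)}|\pi)-\operatorname{D_{KL}}(\nu_s^{(n-2)}|\pi)]\mathrm{d}s$ is integrated against $(\nu_t^{(n)}-\nu_t^{(n-1)})(\mathrm{d}x)$ and therefore vanishes. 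What remains is bounded using Assumption~\ref{assump:F} by $C\,\operatorname{TV}(\nu_t^{(n)},\nu_t^{(n-1)})\int_0^t[\operatorname{TV}(\nu_s^{(n-1)},\nu_s^{(n-2)})+\operatorname{TV}(\mu_s^{(n-1)},\mu_s^{(n-2)})]\mathrm{d}s$, and Pinsker gives $4\operatorname{TV}^2\leq$ symmetrised KL, after which one $\operatorname{TV}$ factor cancels. This requires neither a ratio bound nor any restriction to a subset $\mathcal{K}$, and the paper then proceeds by showing the Picard sequence is Cauchy, identifying the limit as a fixed point via dominated convergence, and proving uniqueness with Gr\"onwall.

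Your proposed workaround, as written, does not close the gap. Saying that ``one only needs a bound that is uniform on $\mathcal{K}$'' for the KL difference yields an estimate of the form $\operatorname{TV}(\alpha_t,\tilde\alpha_t)\leq C\int_0^t[\cdots]\mathrm{d}s + C'$ with an additive constant, which is not a contraction. A correct version of what you seem to be gesturing at is: writing $\log\alpha_t-\log\tilde\alpha_t = A_t(x)+B_t$ with $|A_t(x)|\leq\epsilon_t$ and $B_t$ constant in $x$, the normalisation $\int\alpha_t=\int\tilde\alpha_t=1$ forces $e^{-B_t}=\int\tilde\alpha_t e^{A_t}\in[e^{-\epsilon_t},e^{\epsilon_t}]$, hence $|B_t|\leq\epsilon_t$. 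This works, but it relies on $\phi$ mapping every element of $\mathcal{K}$ to a probability--measure--valued flow, which is not immediate from the Picard formula (the $\operatorname{D_{KL}}(\nu_s^{(n-1)}|\pi)$ term is not a priori the correct normaliser for $\nu_t^{(n)}$) and is strictly stronger than what Lemma~\ref{lem:existenceGF} states for the specific Picard sequence. You would need to extract this from the inductive step of that lemma and verify it holds for arbitrary inputs in $\mathcal{K}$, and your definition of $\mathcal{K}$ would have to include the KL bound, not just the ratio bound, for the induction to close.
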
 
	\begin{proof}[Proof of Lemma \ref{thm:Contractivity}]
 \emph{Step 1: The sequence of flows $\left( (\nu_t^{(n)}, \mu_t^{(n)})_{t \in [0,T]} \right)_{n=0}^{\infty}$ is a Cauchy sequence in $\left(\mathcal{P}(\mathcal{X})^{[0,T]} \times \mathcal{P}(\mathcal{Y})^{[0,T]}, \mathcal{TV}^{[0,T]}\right).$}
 
		From \eqref{eq:Picard1}, we have
		\begin{equation*}
		\begin{split}
		&\log \nu_t^{(n)}(x) - \log \nu_t^{(n-1)}(x) = - \int_0^t \frac{\sigma^2}{2} e^{-\frac{\sigma^2}{2}(t-s)} \times\\
		&\times \left[  \frac{2}{\sigma^2} \left( \frac{\delta F}{\delta \nu}(\nu_s^{(n-1)},\mu_s^{(n-1)},x) - \frac{\delta F}{\delta \nu}(\nu_s^{(n-2)},\mu_s^{(n-2)},x) \right)  - \operatorname{D_{KL}}(\nu_s^{(n-1)}|\pi) + \operatorname{D_{KL}}(\nu_s^{(n-2)}|\pi) \right] \mathrm{d}s \,.
		\end{split}
		\end{equation*}
		Multiplying both sides by $\nu_t^{(n)}(x)$ and integrating with respect to $x$, we obtain
		\begin{multline}
        \label{eq:contrAux1}
		\operatorname{D_{KL}}(\nu_t^{(n)}|\nu_t^{(n-1)}) = - \int_0^t \frac{\sigma^2}{2} e^{-\frac{\sigma^2}{2}(t-s)}\Bigg[  \frac{2}{\sigma^2} \int_{\mathcal{X}}\left( \frac{\delta F}{\delta \nu}(\nu_s^{(n-1)},\mu_s^{(n-1)},x) - \frac{\delta F}{\delta \nu}(\nu_s^{(n-2)},\mu_s^{(n-2)},x) \right)\nu_t^{(n)}(\mathrm{d}x) \\
        - \operatorname{D_{KL}}(\nu_s^{(n-1)}|\pi) + \operatorname{D_{KL}}(\nu_s^{(n-2)}|\pi) \Bigg] \mathrm{d}s \,.
		\end{multline}
		Moreover, note that 
		\begin{multline*}
		\int_{\mathcal{X}}\left( \frac{\delta F}{\delta \nu}(\nu_s^{(n-1)},\mu_s^{(n-1)},x) - \frac{\delta F}{\delta \nu}(\nu_s^{(n-2)},\mu_s^{(n-2)},x) \right)\nu_t^{(n)}(\mathrm{d}x)\\ =\int_{\mathcal{X}}\Bigg( \frac{\delta F}{\delta \nu}(\nu_s^{(n-1)},\mu_s^{(n-1)},x) - \frac{\delta F}{\delta \nu}(\nu_s^{(n-1)},\mu_s^{(n-2)},x) + \frac{\delta F}{\delta \nu}(\nu_s^{(n-1)},\mu_s^{(n-2)},x) -\frac{\delta F}{\delta \nu}(\nu_s^{(n-2)},\mu_s^{(n-2)},x) \Bigg)\nu_t^{(n)}(\mathrm{d}x)\\
		= \int_{\mathcal{X}} \int_{\mathcal{Y}} \int_0^1 \frac{\delta^2 F}{\delta \mu \delta \nu} \left(\nu_s^{(n-1)}, \mu_s^{(n-2)} + \lambda \left( \mu_s^{(n-1)} - \mu_s^{(n-2)} \right),x,w  \right) \mathrm{d}\lambda\left(\mu_s^{(n-1)} - \mu_s^{(n-2)} \right)(\mathrm{d}w) \nu_t^{(n)}(\mathrm{d}x)\\
        + \int_{\mathcal{X}} \int_{\mathcal{X}} \int_0^1 \frac{\delta^2 F}{\delta \nu^2} \left(\nu_s^{(n-2)} + \lambda \left( \nu_s^{(n-1)} - \nu_s^{(n-2)} \right),\mu_s^{(n-2)},x,z  \right)\mathrm{d}\lambda\left(\nu_s^{(n-1)} - \nu_s^{(n-2)} \right)(\mathrm{d}z) \nu_t^{(n)}(\mathrm{d}x).
		\end{multline*}
		Similarly, again from \eqref{eq:Picard1} we have 
		\begin{multline*}
		\log \nu_t^{(n-1)}(x) - \log \nu_t^{(n)}(x) = - \int_0^t \frac{\sigma^2}{2} e^{-\frac{\sigma^2}{2}(t-s)} \times\\
		\times \Bigg[  \frac{2}{\sigma^2} \int_{\mathcal{X}}\left( \frac{\delta F}{\delta \nu}(\nu_s^{(n-2)},\mu_s^{(n-2)},x) - \frac{\delta F}{\delta \nu}(\nu_s^{(n-1)},\mu_s^{(n-1)},x) \right)\nu_t^{(n)}(\mathrm{d}x)\\
        - \operatorname{D_{KL}}(\nu_s^{(n-2)}|\pi) + \operatorname{D_{KL}}(\nu_s^{(n-1)}|\pi) \Bigg] \mathrm{d}s.
		\end{multline*}
		Multiplying both sides by $\nu_t^{(n-1)}(x)$ and integrating with respect to $x$, we obtain
		\begin{multline}
        \label{eq:contrAux2}
		\operatorname{D_{KL}}(\nu_t^{(n-1)}|\nu_t^{(n)}) = - \int_0^t \frac{\sigma^2}{2} e^{-\frac{\sigma^2}{2}(t-s)}\Bigg[\frac{2}{\sigma^2} \int_{\mathcal{X}}\left( \frac{\delta F}{\delta \nu}(\nu_s^{(n-2)},\mu_s^{(n-2)},x) - \frac{\delta F}{\delta \nu}(\nu_s^{(n-1)},\mu_s^{(n-1)},x) \right)\nu_t^{(n-1)}(\mathrm{d}x)\\
        - \operatorname{D_{KL}}(\nu_s^{(n-2)}|\pi) + \operatorname{D_{KL}}(\nu_s^{(n-1)}|\pi) \Bigg]\mathrm{d}s.
		\end{multline}
		Similarly as before, we note that
		\begin{multline*}
		\int_{\mathcal{X}}\left( \frac{\delta F}{\delta \nu}(\nu_s^{(n-2)},\mu_s^{(n-2)},x) - \frac{\delta F}{\delta \nu}(\nu_s^{(n-1)},\mu_s^{(n-1)},x) \right)\nu_t^{(n-1)}(\mathrm{d}x)\\
        =-\int_{\mathcal{X}}\Bigg( \frac{\delta F}{\delta \nu}(\nu_s^{(n-1)},\mu_s^{(n-1)},x) - \frac{\delta F}{\delta \nu}(\nu_s^{(n-1)},\mu_s^{(n-2)},x)
        + \frac{\delta F}{\delta \nu}(\nu_s^{(n-1)},\mu_s^{(n-2)},x) -\frac{\delta F}{\delta \nu}(\nu_s^{(n-2)},\mu_s^{(n-2)},x) \Bigg)\nu_t^{(n-1)}(\mathrm{d}x)\\
		= -\int_{\mathcal{X}} \int_{\mathcal{Y}} \int_0^1 \frac{\delta^2 F}{\delta \mu \delta \nu} \left(\nu_s^{(n-1)}, \mu_s^{(n-2)} + \lambda \left( \mu_s^{(n-1)} - \mu_s^{(n-2)} \right),x,w  \right) \times \mathrm{d}\lambda\left(\mu_s^{(n-1)} - \mu_s^{(n-2)} \right)(\mathrm{d}w) \nu_t^{(n-1)}(\mathrm{d}x)\\
        - \int_{\mathcal{X}} \int_{\mathcal{X}} \int_0^1 \frac{\delta^2 F}{\delta \nu^2} \left(\nu_s^{(n-2)} + \lambda \left( \nu_s^{(n-1)} - \nu_s^{(n-2)} \right),\mu_s^{(n-2)},x,z  \right)\times \mathrm{d}\lambda\left(\nu_s^{(n-1)} - \nu_s^{(n-2)} \right)(\mathrm{d}z) \nu_t^{(n-1)}(\mathrm{d}x).
		\end{multline*}
		Combining \eqref{eq:contrAux1} and \eqref{eq:contrAux2}, we obtain
		\begin{multline*}
		\operatorname{D_{KL}}(\nu_t^{(n)}|\nu_t^{(n-1)}) + \operatorname{D_{KL}}(\nu_t^{(n-1)}|\nu_t^{(n)}) = - \int_0^t e^{-\frac{\sigma^2}{2}(t-s)} \times \Bigg[\\
        \int_{\mathcal{X}} \int_{\mathcal{Y}} \int_0^1 \frac{\delta^2 F}{\delta \mu \delta \nu} \left(\nu_s^{(n-1)}, \mu_s^{(n-2)} + \lambda \left( \mu_s^{(n-1)} - \mu_s^{(n-2)} \right),x,w  \right) \mathrm{d}\lambda\left(\mu_s^{(n-1)} - \mu_s^{(n-2)} \right)(\mathrm{d}w)\left(\nu_t^{(n)} - \nu_t^{(n-1)}\right)(\mathrm{d}x)\\
        + \int_{\mathcal{X}} \int_{\mathcal{X}} \int_0^1 \frac{\delta^2 F}{\delta \nu^2} \left(\nu_s^{(n-2)} + \lambda \left( \nu_s^{(n-1)} - \nu_s^{(n-2)} \right),\mu_s^{(n-2)},x,z  \right) \mathrm{d}\lambda\left(\nu_s^{(n-1)} - \nu_s^{(n-2)} \right)(\mathrm{d}z)\left(\nu_t^{(n)} - \nu_t^{(n-1)}\right)(\mathrm{d}x)\Bigg] \mathrm{d}s.
		\end{multline*}
		Hence, due to Assumption \ref{assump:F}, we get
		\begin{multline*}
		\operatorname{D_{KL}}(\nu_t^{(n)}|\nu_t^{(n-1)}) + \operatorname{D_{KL}}(\nu_t^{(n-1)}|\nu_t^{(n)}) \\
		\leq \operatorname{TV}(\nu_t^{(n)},\nu_t^{(n-1)}) \int_0^t e^{-\frac{\sigma^2}{2}(t-s)} \left(C_{\mu,\nu} \operatorname{TV}(\mu_s^{(n-1)},\mu_s^{(n-2)}) + C_{\nu, \nu} \operatorname{TV}(\nu_s^{(n-1)},\nu_s^{(n-2)})\right)\mathrm{d}s\\
        \leq \max\{C_{\mu,\nu}, C_{\nu, \nu}\}\operatorname{TV}(\nu_t^{(n)},\nu_t^{(n-1)}) \int_0^t e^{-\frac{\sigma^2}{2}(t-s)} \left(\operatorname{TV}(\mu_s^{(n-1)},\mu_s^{(n-2)}) + \operatorname{TV}(\nu_s^{(n-1)},\nu_s^{(n-2)})\right)\mathrm{d}s.
		\end{multline*}
		By the Pinsker-Csizsar inequality,  $\operatorname{TV}^2(\nu_t^{(n)},\nu_t^{(n-1)}) \leq \frac{1}{2} \operatorname{D_{KL}}(\nu_t^{(n)}|\nu_t^{(n-1)}),$ and hence
		\begin{multline*}
		4 \operatorname{TV}^2(\nu_t^{(n)},\nu_t^{(n-1)}) \leq \max\{C_{\mu,\nu}, C_{\nu, \nu}\}\operatorname{TV}(\nu_t^{(n)},\nu_t^{(n-1)}) \int_0^t e^{-\frac{\sigma^2}{2}(t-s)} \Big(\operatorname{TV}(\mu_s^{(n-1)},\mu_s^{(n-2)})\\ 
        + \operatorname{TV}(\nu_s^{(n-1)},\nu_s^{(n-2)})\Big)\mathrm{d}s,
		\end{multline*}
		which gives
        \begin{equation*}
		\operatorname{TV}(\nu_t^{(n)},\nu_t^{(n-1)}) \leq \frac{1}{4}\max\{C_{\mu,\nu}, C_{\nu, \nu}\}\int_0^t e^{-\frac{\sigma^2}{2}(t-s)} \Big(\operatorname{TV}(\mu_s^{(n-1)},\mu_s^{(n-2)}) + \operatorname{TV}(\nu_s^{(n-1)},\nu_s^{(n-2)})\Big)\mathrm{d}s.
		\end{equation*}
        An almost identical argument leads to 
        \begin{equation*}
		\operatorname{TV}(\mu_t^{(n)},\mu_t^{(n-1)}) \leq \frac{1}{4}\max\{C_{\nu,\mu}, C_{\mu, \mu}\}\int_0^t e^{-\frac{\sigma^2}{2}(t-s)} \Big(\operatorname{TV}(\mu_s^{(n-1)},\mu_s^{(n-2)}) + \operatorname{TV}(\nu_s^{(n-1)},\nu_s^{(n-2)})\Big)\mathrm{d}s.
		\end{equation*}
        If we set $C_{\text{max}} \coloneqq \max\{C_{\mu,\nu}, C_{\nu, \nu}\} + \max\{C_{\nu,\mu}, C_{\mu, \mu}\},$ and add the previous two inequalities, we obtain
		\begin{multline*}
		  \operatorname{TV}(\nu_t^{(n)},\nu_t^{(n-1)}) + \operatorname{TV}(\mu_t^{(n)},\mu_t^{(n-1)})\leq \frac{C_{\text{max}}}{4}  \int_0^t e^{-\frac{\sigma^2}{2}(t-s)} \Big(\operatorname{TV}(\mu_s^{(n-1)},\mu_s^{(n-2)}) + \operatorname{TV}(\nu_s^{(n-1)},\nu_s^{(n-2)})\Big)\mathrm{d}s. \\
		\leq \left( \frac{C_{\text{max}}}{4} \right)^{n-1}  e^{-\frac{\sigma^2}{2}t} \int_0^t \int_0^{t_1} \ldots \int_0^{t_{n-2}}  e^{\frac{\sigma^2}{2}t_{n-1}} \Big(\operatorname{TV}(\nu_{t_{n-1}}^{(1)},\nu_{t_{n-1}}^{(0)}) + \operatorname{TV}(\mu_{t_{n-1}}^{(1)},\mu_{t_{n-1}}^{(0)})\Big)\mathrm{d}t_{n-1} \ldots \mathrm{d}t_2 \mathrm{d}t_1 \\
		\leq \left( \frac{C_{\text{max}}}{4} \right)^{n-1} e^{-\frac{\sigma^2}{2}t} \frac{t^{n-2}}{(n-2)!}\int_0^t   e^{\frac{\sigma^2}{2}t_{n-1}} \Big(\operatorname{TV}(\nu_{t_{n-1}}^{(1)},\nu_{t_{n-1}}^{(0)}) + \operatorname{TV}(\mu_{t_{n-1}}^{(1)},\mu_{t_{n-1}}^{(0)})\Big) \mathrm{d}t_{n-1} \\
		\leq \left( \frac{C_{\text{max}}}{4} \right)^{n-1} \frac{t^{n-2}}{(n-2)!}\int_0^t \Big(\operatorname{TV}(\nu_{t_{n-1}}^{(1)},\nu_{t_{n-1}}^{(0)}) + \operatorname{TV}(\mu_{t_{n-1}}^{(1)},\mu_{t_{n-1}}^{(0)})\Big) \mathrm{d}t_{n-1},
		\end{multline*}
		where in the third inequality we bounded $\int_0^{t_{n-2}}  dt_{n-1} \leq \int_0^{t}  dt_{n-1}$ and in the fourth inequality we bounded $e^{\frac{\sigma^2}{2}t_{n-1}}  \leq e^{\frac{\sigma^2}{2}t}$. Hence, we obtain
		\begin{equation*}
		\begin{split}
		\int_0^T &\operatorname{TV}(\nu_t^{(n)},\nu_t^{(n-1)})\mathrm{d}t + \int_0^T \operatorname{TV}(\mu_t^{(n)},\mu_t^{(n-1)}) \mathrm{d}t \\
		&\leq \left( \frac{C_{\text{max}}}{4} \right)^{n-1} \frac{T^{n-1}}{(n-1)!} \Bigg(\int_0^T \operatorname{TV}(\nu_{t_{n-1}}^{(1)},\nu_{t_{n-1}}^{(0)}) \mathrm{d}t_{n-1}
        + \int_0^T \operatorname{TV}(\mu_{t_{n-1}}^{(1)},\mu_{t_{n-1}}^{(0)}) \mathrm{d}t_{n-1}\Bigg).
        \end{split}
		\end{equation*}
    Using the definition of $\mathcal{TV}^{[0,T]},$ the last inequality becomes
    \begin{multline*}
    \mathcal{TV}^{[0,T]} \left( (\nu_t^{(n)}, \mu_t^{(n)})_{t \in [0,T]}, (\nu_t^{(n-1)}, \mu_t^{(n-1)})_{t \in [0,T]} \right)\\ \leq \left( \frac{C_{\text{max}}}{4} \right)^{n-1} \frac{T^{n-1}}{(n-1)!} \mathcal{TV}^{[0,T]} \left( (\nu_t^{(1)}, \mu_t^{(1)})_{t \in [0,T]}, (\nu_t^{(0)}, \mu_t^{(0)})_{t \in [0,T]} \right).
    \end{multline*}
	By choosing $n$ sufficiently large, we conclude that $\left( (\nu_t^{(n)}, \mu_t^{(n)})_{t \in [0,T]} \right)_{n=0}^{\infty}$ is a Cauchy sequence. By completeness of $\left(\mathcal{P}(\mathcal{X})^{[0,T]} \times \mathcal{P}(\mathcal{Y})^{[0,T]}, \mathcal{TV}^{[0,T]}\right),$ the sequence admits a limit point $(\nu_t, \mu_t)_{t \in [0,T]} \in \left(\mathcal{P}(\mathcal{X})^{[0,T]} \times \mathcal{P}(\mathcal{Y})^{[0,T]}, \mathcal{TV}^{[0,T]}\right).$

  \emph{Step 2: The limit point $(\nu_t, \mu_t)_{t \in [0,T]}$ is a fixed point of $\phi.$}
  From Step 1, we obtain that for Lebesgue-almost all $t \in [0,T]$ we have
	\begin{equation*}
	\operatorname{TV}(\nu_t^{(n)},\nu_t) \to 0, \quad \operatorname{TV}(\mu_t^{(n)},\mu_t) \to 0 \qquad \text{ as } n \to \infty.
	\end{equation*}
 Thus, by \eqref{eq: 2.5i} and \eqref{eq: 2.5ii}, we have that for Lebesgue-almost all $t \in [0,T]$ and any fixed $(x,y) \in \mathcal{X} \times \mathcal{Y},$ the flat derivatives $\frac{\delta F}{\delta \nu}(\nu_t^{(n-1)}, \mu_t^{(n-1)}, x)$ and $\frac{\delta F}{\delta \mu}(\nu_t^{(n-1)}, \mu_t^{(n-1)}, y)$ from \eqref{eq:Picard1} and \eqref{eq:Picard2} converge to $\frac{\delta F}{\delta \nu}(\nu_t, \mu_t, x)$ and $\frac{\delta F}{\delta \mu}(\nu_t, \mu_t, y),$ respectively, as $n \to \infty.$ Using Assumption \ref{assumption: boundedness-first-flat}, we can apply the dominated convergence theorem and obtain 
 \begin{equation*}
     \lim_{n \to \infty} \int_0^t \frac{\delta F}{\delta \nu}(\nu_s^{(n-1)}, \mu_s^{(n-1)}, x) \mathrm{d}s = \int_0^t \frac{\delta F}{\delta \nu}(\nu_s, \mu_s, x) \mathrm{d}s,
 \end{equation*}
 \begin{equation*}
     \lim_{n \to \infty} \int_0^t \frac{\delta F}{\delta \mu}(\nu_s^{(n-1)}, \mu_s^{(n-1)}, y) \mathrm{d}s = \int_0^t \frac{\delta F}{\delta \mu}(\nu_s, \mu_s, y) \mathrm{d}s,
 \end{equation*}
for Lebesgue-almost all $t \in [0,T]$ and any fixed $(x,y) \in \mathcal{X} \times \mathcal{Y}.$

Moreover, in \eqref{eq:Picard1} and \eqref{eq:Picard2}, using Assumption \ref{assumption: boundedness-first-flat}, \eqref{eq:assumpSup}, Lemma \ref{lem:existenceGF} and the fact that $(\nu_t^{(n)}, \mu_t^{(n)})_{t \in [0,T]}$ is a flow of probability measures for all $n,$ we obtain for all $n$ and $t \in [0,T],$ and any fixed $(x,y) \in \mathcal{X} \times \mathcal{Y},$ that 
            \begin{equation*}
			\nu_t^{(n)}(x)\log \frac{\nu_t^{(n)}(x)}{\pi(x)} \leq 3\log R_{\nu} + \frac{6}{\sigma^2}C_{\nu},
			\end{equation*}
            \begin{equation*}
			\mu_t^{(n)}(y)\log \frac{\mu_t^{(n)}(y)}{\rho(y)} \leq 3\log R_{\mu} + \frac{6}{\sigma^2}C_{\mu}.
			\end{equation*}
Furthermore, since $(\nu_t^{(n)}, \mu_t^{(n)})_{t \in [0,T]}$ are considered to be absolutely continuous with respect to the Lebesgue measure for all $n,$ it follows that convergence in TV distance implies convergence in $L^1.$ Then due to the dominated convergence theorem, we deduce that $\operatorname{D_{KL}}(\nu_t^{(n)}|\pi)$ and $\operatorname{D_{KL}}(\mu_t^{(n)}|\rho)$ converge to $\operatorname{D_{KL}}(\nu_t|\pi)$ and $\operatorname{D_{KL}}(\mu_t|\rho),$ respectively, for Lebesgue-almost all $t \in [0,T],$ as $n \to \infty.$ Hence, using again Lemma \ref{lem:existenceGF} and the dominated convergence theorem, we obtain 
\begin{equation*}
    \lim_{n \to \infty} \int_0^t \operatorname{D_{KL}}(\nu_s^{(n)}|\pi) \mathrm{d}s = \int_0^t \operatorname{D_{KL}}(\nu_s|\pi) \mathrm{d}s,
\end{equation*}
\begin{equation*}
    \lim_{n \to \infty} \int_0^t \operatorname{D_{KL}}(\mu_s^{(n)}|\rho) \mathrm{d}s = \int_0^t \operatorname{D_{KL}}(\mu_s|\rho) \mathrm{d}s,
\end{equation*}
for Lebesgue-almost all $t \in [0,T].$

Therefore, letting $n \to \infty$ in \eqref{eq:Picard1} and \eqref{eq:Picard2} and using the fact that $x \mapsto \log x$ is continuous on $(0, \infty),$ we conclude that $(\nu_t, \mu_t)_{t \in [0,T]}$ is a fixed point of $\phi.$

\emph{Step 3: The fixed point $(\nu_t, \mu_t)_{t \in [0,T]}$ of $\phi$ is unique.}
Suppose, for the contrary, that $\phi$ admits two fixed points $(\nu_t, \mu_t)_{t \in [0,T]}$ and $(\bar{\nu}_t, \bar{\mu}_t)_{t \in [0,T]}$ such that $\nu_0 = \bar{\nu}_0$ and $\mu_0 = \bar{\mu}_0.$ Then repeating the same calculations from Step 1, we arrive at
\begin{equation*}
		\operatorname{D_{KL}}(\nu_t|\bar{\nu}_t) + \operatorname{D_{KL}}(\bar{\nu}_t|\nu_t) \leq \max\{C_{\mu,\nu}, C_{\nu, \nu}\}\operatorname{TV}(\nu_t,\bar{\nu}_t) \int_0^t e^{-\frac{\sigma^2}{2}(t-s)} \left(\operatorname{TV}(\mu_s,\bar{\mu}_s) + \operatorname{TV}(\nu_s,\bar{\nu}_s)\right)\mathrm{d}s.
		\end{equation*}
		By the Pinsker-Csizsar inequality,  $\operatorname{TV}^2(\nu_t, \bar{\nu}_t)\leq \frac{1}{2} \operatorname{D_{KL}}(\nu_t|\bar{\nu}_t),$ and hence
		\begin{equation*}
		4 \operatorname{TV}^2(\nu_t, \bar{\nu}_t) \leq \max\{C_{\mu,\nu}, C_{\nu, \nu}\}\operatorname{TV}(\nu_t,\bar{\nu}_t) \int_0^t e^{-\frac{\sigma^2}{2}(t-s)} \left(\operatorname{TV}(\mu_s,\bar{\mu}_s) + \operatorname{TV}(\nu_s,\bar{\nu}_s)\right)\mathrm{d}s,
		\end{equation*}
		which gives
        \begin{equation*}
		\operatorname{TV}(\nu_t, \bar{\nu}_t) \leq \frac{1}{4}\max\{C_{\mu,\nu}, C_{\nu, \nu}\} \int_0^t e^{-\frac{\sigma^2}{2}(t-s)} \left(\operatorname{TV}(\mu_s,\bar{\mu}_s) + \operatorname{TV}(\nu_s,\bar{\nu}_s)\right)\mathrm{d}s,
		\end{equation*}
        An almost identical argument leads to 
        \begin{equation*}
		\operatorname{TV}(\mu_t,\bar{\mu}_t) \leq \frac{1}{4}\max\{C_{\nu,\mu}, C_{\mu, \mu}\}\int_0^t e^{-\frac{\sigma^2}{2}(t-s)} \left(\operatorname{TV}(\mu_s,\bar{\mu}_s) + \operatorname{TV}(\nu_s,\bar{\nu}_s)\right)\mathrm{d}s.
		\end{equation*}
    If we set $C_{\text{max}} \coloneqq \max\{C_{\mu,\nu}, C_{\nu, \nu}\} + \max\{C_{\nu,\mu}, C_{\mu, \mu}\},$ and add the previous two inequalities, we obtain
    \begin{equation*}
        \operatorname{TV}(\nu_t, \bar{\nu}_t) + \operatorname{TV}(\mu_t,\bar{\mu}_t) \leq \frac{C_{\text{max}}}{4} \int_0^t e^{-\frac{\sigma^2}{2}(t-s)} \left(\operatorname{TV}(\mu_s,\bar{\mu}_s) + \operatorname{TV}(\nu_s,\bar{\nu}_s)\right)\mathrm{d}s.
    \end{equation*}
    For each $t \in [0,T],$ denote $f(t) \coloneqq \int_0^t e^{\frac{\sigma^2}{2}s}\left(\operatorname{TV}(\mu_s,\bar{\mu}_s) + \operatorname{TV}(\nu_s,\bar{\nu}_s)\right) \mathrm{d}s.$ Observe that $f \geq 0$ and $f(0) = 0.$ Then, by Gronwall's lemma, we obtain
    \begin{equation*}
        0 \leq f(t) \leq e^{\frac{C_{\text{max}}}{4}t}f(0) = 0,
    \end{equation*}
    and hence 
    \begin{equation*}
        \operatorname{TV}(\nu_t, \bar{\nu}_t) + \operatorname{TV}(\mu_t,\bar{\mu}_t) = 0,
    \end{equation*}
    for Lebesgue-almost all $t \in [0,T],$ which implies
    \begin{equation*}
        \nu_t = \bar{\nu}_t, \quad \mu_t = \bar{\mu}_t,
    \end{equation*}
    for Lebesgue-almost all $t \in [0,T].$ Therefore, the fixed point $(\nu_t, \mu_t)_{t \in [0,T]}$ of $\phi$ must be unique.

    From Steps 1, 2 and 3, we obtain the existence and uniqueness of a pair of flows $(\nu_t, \mu_t)_{t \in [0,T]}$ satisfying \eqref{eq:bd1} for any $T > 0.$
    \end{proof}	
 
Returning to the proof of Theorem \ref{thm:existence}, recall from Step 1 of Lemma \ref{thm:Contractivity} that for Lebesgue-almost all $t \in [0,T]$ we have
	\begin{equation*}
	\operatorname{TV}(\nu_t^{(n)},\nu_t) \to 0, \quad \operatorname{TV}(\mu_t^{(n)},\mu_t) \to 0 \qquad \text{ as } n \to \infty \,,
	\end{equation*}
	which implies
	\begin{equation*}
	\nu_t^{(n)} \to \nu_t, \quad \mu_t^{(n)} \to \mu_t \qquad \text{ weakly as } n \to \infty \,.
	\end{equation*}
	Hence, using the lower semi-continuity of the entropy, 
 we obtain
	\begin{equation}
    \label{eq:KLnutBound2}
	\operatorname{D_{KL}}(\nu_t|\pi) \leq \liminf_{n \to \infty} \operatorname{D_{KL}}(\nu_t^{(n)}|\pi) \leq 2 \log R_{\nu} + \frac{4}{\sigma^2}C_{\nu} \,, 
	\end{equation}
    \begin{equation}
        \label{eq:KLmutBound2}
	\operatorname{D_{KL}}(\mu_t|\rho) \leq \liminf_{n \to \infty} \operatorname{D_{KL}}(\mu_t^{(n)}|\rho) \leq 2 \log R_{\mu} + \frac{4}{\sigma^2}C_{\mu} \,,
    \end{equation}
	where both second inequalities follow from Lemma \ref{lem:existenceGF}. In order to ensure that the solution $(\nu_t, \mu_t)_{t \in [0,T]}$ can be extended to all $t \geq 0$, we first need to prove the bound on the ratios $\nu_t/\pi$ and $\mu_t/\rho$ in \eqref{eq:ratioBoundAllt}.

	\emph{Step 2: Ratio condition \eqref{eq:ratioBoundAllt}.}
			Using \eqref{eq:Picard1} and \eqref{eq:Picard2}, we see that for any $t \in [0,T]$ we have 
			\begin{equation*}
			\log \frac{\nu_t^{(n)}(x)}{\pi(x)} = e^{-\frac{\sigma^2}{2}t} \log \frac{\nu_0(x)}{\pi(x)} - \int_0^t \frac{\sigma^2}{2} e^{-\frac{\sigma^2}{2}(t-s)} \left( \frac{2}{\sigma^2} \frac{\delta F}{\delta \nu}(\nu_s^{(n-1)}, \mu_s^{(n-1)}, x) - \operatorname{D_{KL}}(\nu_s^{(n-1)}|\pi) \right) \mathrm{d}s,
			\end{equation*}
            \begin{equation*}
			\log \frac{\mu_t^{(n)}(y)}{\rho(y)} = e^{-\frac{\sigma^2}{2}t} \log \frac{\mu_0(y)}{\rho(y)} 
            + \int_0^t \frac{\sigma^2}{2} e^{-\frac{\sigma^2}{2}(t-s)} \left( \frac{2}{\sigma^2} \frac{\delta F}{\delta \mu}(\nu_s^{(n-1)}, \mu_s^{(n-1)}, y) + \operatorname{D_{KL}}(\mu_s^{(n-1)}|\rho) \right) \mathrm{d}s.
			\end{equation*}
			Using Assumption \ref{assumption: boundedness-first-flat}, \eqref{eq:assumpSup}, \eqref{eq:KLnutBound2} and \eqref{eq:KLmutBound2} we obtain
            \begin{equation*}
			\log \frac{\nu_t(x)}{\pi(x)} \leq 3\log R_{\nu} + \frac{6}{\sigma^2}C_{\nu},
			\end{equation*}
            \begin{equation*}
			\log \frac{\mu_t(y)}{\rho(y)} \leq 3\log R_{\mu} + \frac{6}{\sigma^2}C_{\mu}.
			\end{equation*}
			Hence we can choose $R_{1, \nu} \coloneqq 1 + \exp \left(3\log R_{\nu} + \frac{6}{\sigma^2}C_{\nu}\right)$ and $R_{1, \mu} \coloneqq 1 + \exp \left(3\log R_{\mu} + \frac{6}{\sigma^2}C_{\mu}\right).$ Note $R_{1, \nu}, R_{1, \mu} > 1$ are conveniently chosen so that $\log R_{1, \nu}, \log R_{1, \mu} > 0$ in our subsequent calculations.  
			Obtaining a lower bound on $\frac{\nu_t(x)}{\pi(x)}$ and $\frac{\mu_t(y)}{\rho(y)}$ follows similarly, by using \eqref{eq:assumpInf} instead of \eqref{eq:assumpSup}.
	
\emph{Step 3: Existence of the gradient flow on $[0,\infty)$.}	
 In order to complete our proof, note that the unique solution $(\nu_t, \mu_t)_{t \in [0,T]}$ to \eqref{eq:bd1} can also be expressed as
	\begin{equation}\label{eq:flowSolution}
 \begin{split}
	\nu_t(x) &= \nu_0(x) \exp \left( - \int_0^t \left( \frac{\delta F}{\delta \nu} (\nu_s, \mu_s, x) + \frac{\sigma^2}{2}\log \left( \frac{\nu_s(x)}{\pi(x)}\right) - \frac{\sigma^2}{2}\operatorname{D_{KL}}(\nu_s|\pi) \right) \mathrm{d}s \right), \\
	\mu_t(y) &= \mu_0(y) \exp \left( \int_0^t \left( \frac{\delta F}{\delta \mu} (\nu_s, \mu_s, y) - \frac{\sigma^2}{2}\log \left( \frac{\mu_s(y)}{\rho(y)}\right) + \frac{\sigma^2}{2}\operatorname{D_{KL}}(\mu_s|\rho) \right) \mathrm{d}s \right) \,. 
 \end{split}
	\end{equation}
 Then it follows that the flows $(\nu_t, \mu_t)_{t \in [0,T]}$ are continuous and differentiable in time.
 
From \ref{assumption: boundedness-first-flat}, \eqref{eq:KLnutBound2}, \eqref{eq:KLmutBound2} and \eqref{eq:ratioBoundAllt}, we obtain for any $t \in [0,T]$
	\begin{multline}
    \label{eq:estimated-bd-flow1}
	\left|\frac{\delta F}{\delta \nu} (\nu_t, \mu_t, x) + \frac{\sigma^2}{2}\log \left( \frac{\nu_t(x)}{\pi(x)}\right) - \frac{\sigma^2}{2}\operatorname{D_{KL}}(\nu_t|\pi)\right|\\ 
 \leq 3C_{\nu} + \frac{\sigma^2}{2}\left( \max\{|\log r_{1, \nu}|, \log R_{1, \nu}\} + 2 \log R_{\nu}\right)  =: C_{V, \nu},
	\end{multline}
    \begin{multline*}
	\left|\frac{\delta F}{\delta \mu} (\nu_t, \mu_t, y) - \frac{\sigma^2}{2}\log \left( \frac{\mu_t(y)}{\rho(y)}\right) + \frac{\sigma^2}{2}\operatorname{D_{KL}}(\mu_t|\rho)\right|\\ 
 \leq 3C_{\mu} + \frac{\sigma^2}{2}\left( \max\{|\log r_{1, \mu}|, \log R_{1, \mu}\} + 2 \log R_{\mu}\right)  =: C_{V, \mu}.
	\end{multline*}
	This gives $\| \nu_t \|_{TV} \leq \| \nu_0 \|_{TV} e^{C_{V, \nu} t}$ and $\| \mu_t \|_{TV} \leq \| \mu_0 \|_{TV} e^{C_{V, \mu} t}$, and shows that $\nu_t$ and $\mu_t$ do not explode in any finite time, hence we obtain a global solution $(\nu_t, \mu_t)_{t \in [0,\infty)},$ which is continuous and differentiable in time. In particular, the bounds in \eqref{eq:KLnutBound2}, \eqref{eq:KLmutBound2}, \eqref{eq:ratioBoundAllt} and \eqref{eq:ratioBoundAllt2} hold for all $t > 0.$

\end{proof}

\section{Notation and definitions}
\label{app: AppB}
In this section we recall some important definitions. Following \citet[Definition $5.43$]{Carmona2018ProbabilisticTO}, 
we start with the notion of differentiability on the space of probability measure that we utilize throughout the paper.
\begin{definition}
\label{def:fderivative} 
Fix $p \geq 0.$ For any $\mathcal{M} \subseteq \mathbb R^d,$ let $\mathcal{P}_p(\mathcal{M})$ be the space of probability measures on $\mathcal{M}$ with finite $p$-th moments. A function $F:\mathcal P_p(\mathcal{M}) \to \mathbb R$ admits first-order flat derivative on $\mathcal P_p(\mathcal{M}),$ if there exists a function $\frac{\delta F}{\delta \nu}: \mathcal P_p(\mathcal{M}) \times \mathcal{M}\rightarrow \mathbb R,$ such that
\begin{enumerate}
\item the map $\mathcal P_p(\mathcal{M}) \times \mathcal{M} \ni (m, x) \mapsto \frac{\delta F}{\delta m}(m,x)$ is jointly continuous with respect to the product topology, where $\mathcal P_p(\mathcal{M})$ is endowed with the weak topology,
\item For any $m \in \mathcal P_p(\mathcal{M}),$ there exists $C>0$ such that, for all $x\in \mathcal{M},$ we have 
\[
\left|\frac{\delta F}{\delta m}(m,x)\right|\leq C\left(1+|x|^p\right),
\]
\item For all $m, m' \in \mathcal P_p(\mathcal{M}),$ it holds that
\begin{equation}
\label{def:FlatDerivative}
F(m')-F(m)=\int_{0}^{1}\int_{\mathcal{M}}\frac{\delta F}{\delta m}(m + \varepsilon (m'-m),x)\left(m'- m\right)(\mathrm{d}x)\mathrm{d}\varepsilon.
\end{equation}
\end{enumerate}
The functional $\frac{\delta F}{\delta m}$ is then called the flat derivative of $F$ on $\mathcal P_p(\mathcal{M}).$ We note that $\frac{\delta F}{\delta m}$ exists up to an additive constant, and thus we make the normalizing convention $\int_{\mathcal{M}} \frac{\delta F}{\delta m}(m,x) m(\mathrm{d}x) = 0.$
\end{definition}
If, for any fixed $x \in \mathcal{M},$ the map $m \mapsto \frac{\delta F}{\delta m}(m,x)$ satisfies Definition \ref{def:fderivative}, we say that $F$ admits a second-order flat derivative denoted by $\frac{\delta^2 F}{\delta m^2}.$ Consequently, by Definition \ref{def:fderivative}, there exists a functional $\frac{\delta^2 F}{\delta m^2}: \mathcal P_p(\mathcal{M}) \times\mathcal{M} \times\mathcal{M} \rightarrow \mathbb R$ such that
\begin{equation}
\label{def:2FlatDerivative}
\frac{\delta F}{\delta m}(m',x)-\frac{\delta F}{\delta m}(m,x) = \int_{0}^{1}\int_{\mathcal{M}}\frac{\delta^2 F}{\delta m^2}(\nu + \varepsilon (m'-m),x, x')\left(m'- m\right)(\mathrm{d}x')\mathrm{d}\varepsilon.
\end{equation}
\begin{example}
\label{eg:deriveF}
We provide some simple examples of functions $F:\mathcal P_p(\mathcal{M})\to\mathbb R$ and their flat derivatives.
\begin{enumerate}
\item Let $F(m) = \int_{\mathcal{M}} f(x) m(\mathrm{d}x)$, where $f: \mathcal{M} \to \mathbb R$ is continuous and has $p$-th order growth. Then $$\frac{\delta F}{\delta m}(m,x) = f(x), \quad \frac{\delta^2 F}{\delta m^2}(m,x) = 0.$$
\item Let $F(\nu, \mu) = \int_{\mathcal{M}}\int_{\mathcal{M}} f(x,y) \nu(\mathrm{d}x)\mu(\mathrm{d}y).$ Then $$\frac{\delta F}{\delta \nu}(\nu,\mu,x) = \int_{\mathcal{M}} f(x,y) \mu(\mathrm{d}y), \quad \frac{\delta F}{\delta \mu}(\mu,\nu,y) = \int_{\mathcal{M}} f(x,y) \nu(\mathrm{d}x), \quad \frac{\delta^2 F}{\delta \nu^2}(\nu,\mu,x) = \frac{\delta^2 F}{\delta \mu^2}(\mu,\nu,y) = 0.$$
\item Let $F(\nu,\mu) = g\left(\int_{\mathcal{M}} \int_{\mathcal{M}} f(x,y) \nu(\mathrm{d}x)\mu(\mathrm{d}y)\right)$ with $f$ continuous and $g$ twice continuously differentiable. Then using the chain rule we obtain $$ \frac{\delta F}{\delta \nu}(\nu,\mu,x) =  g'\left(\int_{\mathcal{M}} \int_{\mathcal{M}} f(x,y) \nu(\mathrm{d}x)\mu(\mathrm{d}y)\right)\int_{\mathcal{M}} f(x,y) \mu(\mathrm{d}y),$$
$$\frac{\delta F}{\delta \mu}(\mu,\nu,y) =  g'\left(\int_{\mathcal{M}} \int_{\mathcal{M}} f(x,y) \nu(\mathrm{d}x)\mu(\mathrm{d}y)\right)\int_{\mathcal{M}} f(x,y) \nu(\mathrm{d}x),$$
\begin{multline*}
    \frac{\delta^2 F}{\delta \mu \delta \nu}(\mu, \nu, x, y) = \frac{\delta^2 F}{\delta \nu \delta \mu}(\nu, \mu, y, x) = g'\left(\int_{\mathcal{M}} \int_{\mathcal{M}} f(x,y) \nu(\mathrm{d}x)\mu(\mathrm{d}y)\right) f(x,y)\\ + \int_{\mathcal{M}} f(x,y) \mu(\mathrm{d}y)g''\left(\int_{\mathcal{M}} \int_{\mathcal{M}} f(x,y) \nu(\mathrm{d}x)\mu(\mathrm{d}y)\right)\int_{\mathcal{M}} f(x,y) \nu(\mathrm{d}x),
\end{multline*}
\begin{equation*}
    \frac{\delta^2 F}{\delta \nu^2}(\nu,\mu,x,x') =  g''\left(\int_{\mathcal{M}} \int_{\mathcal{M}} f(x,y) \nu(\mathrm{d}x)\mu(\mathrm{d}y)\right)\int_{\mathcal{M}} f(x,y) \mu(\mathrm{d}y)\int_{\mathcal{M}} f(x',y) \mu(\mathrm{d}y),
\end{equation*}
\begin{equation*}
    \frac{\delta^2 F}{\delta \mu^2}(\nu,\mu,y,y') =  g''\left(\int_{\mathcal{M}} \int_{\mathcal{M}} f(x,y) \nu(\mathrm{d}x)\mu(\mathrm{d}y)\right)\int_{\mathcal{M}} f(x,y) \nu(\mathrm{d}x)\int_{\mathcal{M}} f(x,y') \nu(\mathrm{d}x).
\end{equation*}

		\end{enumerate}
\end{example}

\begin{definition}[TV distance between probability measures; \citep{tsybakov2008introduction}, Definition $2.4$]
\label{def:KRwasserstein}
    Let $\left(\mathcal{M}, \mathcal{A}\right)$ be a measurable space and let $P$ and $Q$ be probability measures on $\left(\mathcal{M}, \mathcal{A}\right)$. Assume that $\mu$ is a $\sigma$-finite measure on $\left(\mathcal{M}, \mathcal{A}\right)$ such that $P$ and $Q$ are absolutely continuous with respect to $\mu$ and let $p$ and $q$ denote their probability density functions, respectively. The total variation distance between $P$ and $Q$ is defined as:
    \begin{equation*}
        \operatorname{TV}(P,Q) \coloneqq \sup_{A \in \mathcal{A}}\left|P(A) - Q(A)\right| = \sup_{A \in \mathcal{A}} \left|\int_{A} (p-q) \mathrm{d}\mu\right|.
    \end{equation*}
\end{definition}

\section{A formal derivation of the Fisher-Rao gradient flow}
\label{sec:formal-FR}
In this section, we follow \citet{lu_accelerating_2019, yan2023learninggaussianmixturesusing, Gallouet2017} to formally derive the Fisher-Rao gradient flow of a function $G:\mathcal P_{\text{ac}}(\mathcal{M}) \to \mathbb R,$ with $\mathcal{M} \subseteq \mathbb R.$ Following \citet{otto}, we define the tangent space $T^{\text{FR}}_{\lambda}(\mathcal{M})$ at any $\lambda \in \mathcal P_{\text{ac}}(\mathcal{M})$ by 
\begin{equation*}
    T^{\text{FR}}_{\lambda}(\mathcal{M}) \coloneqq \left\{\text{functions $\xi$ on $\mathcal{M}$ such that $\int_{\mathcal{M}} \xi(x)\mathrm{d}x = 0$}\right\}.
\end{equation*}
The tangent space $T^{\text{FR}}_{\lambda}(\mathcal{M})$ can also be identified with
\begin{equation*}
     T^{\text{FR}}_{\lambda}(\mathcal{M}) \left\{\xi: \xi(x) = \lambda(x)\left(r(x) - \int_{\mathcal{M}}r(x)\lambda(x)\mathrm{d}x\right), \text{ with } r \in L^2\left(\mathcal{M};\lambda\right)\right\}.
\end{equation*}
On this tangent space, we define the Riemannian metric $g^{\text{FR}}_{\lambda}:T^{\text{FR}}_{\lambda}(\mathcal{M}) \times T^{\text{FR}}_{\lambda}(\mathcal{M}) \to \mathbb R$ at $\lambda$ by
\begin{equation}
\label{eq:Riem-metric}
    g^{\text{FR}}_{\lambda}(\xi_1, \xi_2) \coloneqq \int_{\mathcal{M}} \frac{\xi_1(x) \cdot \xi_2(x)}{\lambda(x)} \mathrm{d}x, \quad \text{for } \xi_1, \xi_2 \in T^{\text{FR}}_{\lambda}(\mathcal{M}).
\end{equation}
Then for any $\xi_i(x) = \lambda(x)\left(r_i(x) - \int_{\mathcal{M}}r_i(x)\lambda(x)\mathrm{d}x\right),$ for $i=1,2,$ the metric $g^{\text{FR}}_{\lambda}$ becomes
\begin{equation*}
    g^{\text{FR}}_{\lambda}(\xi_1, \xi_2) = \int_{\mathcal{M}} r_1(x)r_2(x) \lambda(x)\mathrm{d}x - \int_{\mathcal{M}} r_1(x) \lambda(x)\mathrm{d}x \int_{\mathcal{M}} r_2(x) \lambda(x)\mathrm{d}x.
\end{equation*}
With the metric $g^{\text{FR}}_{\lambda}$, the Fisher-Rao distance FR, defined by \eqref{eq:dynamic_FR_metric}, can be viewed as the geodesic distance on $\left(\mathcal P_{\text{ac}}(\mathcal{M}), g^{\text{FR}}_{\lambda}\right).$

Now, we derive the Fisher-Rao gradient flow for a given function $G:\mathcal P_{\text{ac}}(\mathcal{M}) \to \mathbb R,$ which we assume is flat differentiable (cf. Definition \ref{def:fderivative}). Let $(\lambda_t)_{t \geq 0}$ be a smooth curve starting from $\lambda_0 = \lambda \in \mathcal P_{\text{ac}}(\mathcal{M})$ with arbitrary tangent vector
\begin{equation}
\label{eq:tgt-vect}
    \xi(x) = \partial_t \lambda_t(x) \vert_{t=0} = \lambda(x)\left(r(x) - \int_{\mathcal{M}}r(x)\lambda(x)\mathrm{d}x\right).
\end{equation} 
The metric gradient $\operatorname{grad}^{\text{FR}} G \in L^2(\mathcal{M}; \lambda)$ of $G$ with respect to $g^{\text{FR}}_{\lambda}$ is defined by
\begin{equation}
\label{eq:metric-grad}
     \left. \frac{\mathrm{d}}{\mathrm{d}t} G(\lambda_t) \right\vert_{t=0} = g^{\text{FR}}_{\lambda}\left(\operatorname{grad}^{\text{FR}} G(\lambda), \xi \right),
\end{equation}
and the gradient flow of $G$ on $\left(\mathcal P_{\text{ac}}(\mathcal{M}), \text{FR}\right)$ is defined by
\begin{equation}
\label{eq:grad-flow}
    \partial_t \lambda_t(x) = - \operatorname{grad}^{\text{FR}} G(\lambda_t)(x).
\end{equation}
Then, by the chain rule, \eqref{eq:tgt-vect} and \eqref{eq:Riem-metric}, we obtain
\begin{equation*}
   \left. \frac{\mathrm{d}}{\mathrm{d}t} G(\lambda_t) \right\vert_{t=0} = \int_{\mathcal{M}} \frac{\delta G}{\delta \lambda}(\lambda,x) [\partial_t \lambda_t(x)] \vert_{t=0} \mathrm{d}x = \int_{\mathcal{M}} \frac{\delta G}{\delta \lambda}(\lambda,x) \xi(x) \mathrm{d}x = g^{\text{FR}}_{\lambda}\left(\frac{\delta G}{\delta \lambda}(\lambda,\cdot)\lambda , \xi \right).
\end{equation*}
Since $\xi \in T^{\text{FR}}_{\lambda}(\mathcal{M})$ is arbitrary, it follows from \eqref{eq:metric-grad} that
\begin{equation*}
    \operatorname{grad}^{\text{FR}} G(\lambda)(x) = \frac{\delta G}{\delta \lambda}(\lambda,x)\lambda(x),
\end{equation*}
and consequently by \eqref{eq:grad-flow},
\begin{equation*}
    \partial_t \lambda_t(x) = -\frac{\delta G}{\delta \lambda}(\lambda_t,x)\lambda_t(x).
\end{equation*}
\section{Example: Convergence of discrete-time Fisher-Rao dynamics for a non-interactive min-max game}
\label{sec:non-interactive}
In this section, we provide an example of a game for which the convergence of the discrete-time Fisher-Rao dynamics to an MNE (measured in NI error) occurs at rate $\mathcal{O}\left(\frac{1}{\eta n}\right)$ in the non-regularized setting, and at rate $\mathcal{O}\left(e^{-\frac{1}{2}\sigma \eta n}\right)$ (to the unique MNE) when regularization is added to the objective function. Here, $\eta > 0$ and $\sigma > 0$ denote the discretization step-size and regularization parameter, respectively.

To be precise in what follows, we use the terms ``discrete-time Fisher-Rao'' and ``Mirror Descent-Ascent'' as synonyms. As it is argued in \cite{wang2023exponentially} (at the end of the first bullet point on page 8) a worst-case example which shows convergence of the Fisher-Rao dynamics in discrete time in terms of the NI error at a rate $\mathcal{O}\left(\frac{1}{\eta n}\right)$ can be constructed by following the arguments from Proposition 5.5, Setting II (see the proof of Proposition 5.1 for this specific setting) in \cite{OJMO_2022__3__A8_0}. 

\citet[Proposition 5.5]{OJMO_2022__3__A8_0} discusses the case of minimizing a (non-linear) convex function along the iterates of the Mirror Descent algorithm (see Algorithm 1 in \citep{OJMO_2022__3__A8_0}). In order to establish the connection between the setting in \citet{OJMO_2022__3__A8_0} and the min-max games setting, we note that removing the terms corresponding to the Wasserstein flow in the Conic Particle Mirror-Prox (CP-MP) algorithm in \cite{wang2023exponentially} (see the update rule below Lemma $2.1$) and taking $L=1$ retrieves the Mirror Descent-Ascent (MDA) scheme with step-size $\eta > 0$ for min-max games in which players update simultaneously. The zero-step-size limit of the MDA scheme is precisely the Fisher-Rao gradient flow. 

Following Setting II in the proof of Proposition 5.1 in \citet{OJMO_2022__3__A8_0}, we construct an example for the convergence of the Fisher-Rao dynamics in discrete time in terms of the NI error at a rate $\mathcal{O}\left(\frac{1}{\eta n}\right)$ for the min-max setting in \cite{wang2023exponentially} as follows. 

First, recall that the min-max game considered in \cite{wang2023exponentially} is given by setting $F = \int_{\mathcal{Y}} \int_{\mathcal{X}} f(x,y) \mu(\mathrm{d}x)\nu(\mathrm{d}y)$ and $\sigma = 0$ in \eqref{eq:F-nonlinear}, i.e.,
\begin{equation}
\label{eq:non-reg-game}
     \min_{\mu \in \mathcal{P}(\mathcal{X})} \max_{\nu \in \mathcal{P}(\mathcal{Y})} F^0(\mu, \nu), \quad \text{ with } F^0(\mu, \nu) \coloneqq \int_{\mathcal{Y}} \int_{\mathcal{X}} f(x,y) \mu(\mathrm{d}x)\nu(\mathrm{d}y).
\end{equation}
Consider $f(x,y) = \Phi(x) - \Phi(y),$ where $\Phi$ is any smooth function such that $\Phi(x) = \frac{1}{2}\|x\|_2^2$ when $\|x\|_2 \leq \frac{1}{2}$ and $\Phi(x) \geq \frac{1}{4}$ otherwise. For this choice of $f,$ initial measures $(\mu^0, \nu^0)$ supported on the entire space $\mathcal{X}$ and $\mathcal{Y},$ and any step-size $\eta > 0,$ the simultaneous MDA update reads
\begin{equation*}
\begin{cases}
    \mu^{n+1} = \argmin_{\mu} \{\int \Phi(x)(\mu-\mu^n)(\mathrm{d}x) + \frac{1}{\eta} \operatorname{D_{KL}}(\mu, \mu^n)\},\\
    \nu^{n+1} = \argmax_{\nu} \{\int (-\Phi(y))(\nu-\nu^n)(\mathrm{d}y) - \frac{1}{\eta} \operatorname{D_{KL}}(\nu, \nu^n)\}.
\end{cases}
\end{equation*}
Hence, we can write $\mu^n(x) \propto \mu^0(x) \exp(-n \eta \frac{1}{2} \|x\|_2^2)$ and $\nu^n(y) \propto \nu^0(y)\exp(-n \eta \frac{1}{2} \|y\|_2^2),$ which are Gaussian distributions of variance $\frac{1}{\eta n}.$ Therefore, for $V^0$ given by \eqref{eq:non-reg-game}, we can compute
\begin{equation*}
    \operatorname{NI}(\mu^n, \nu^n) = \max_{(\mu, \nu)}\left(F^0(\nu^n, \mu) - F^0(\nu, \mu^n)\right) = \int_{\mathcal{Y}} \Phi(y) \nu^n(\mathrm{d}y) + \int_{\mathcal{X}} \Phi(x)\mu^n(\mathrm{d}x) - \min_{\nu} \int_{\mathcal{Y}} \Phi(y) \nu(\mathrm{d}y) - \min_{\mu} \int_{\mathcal{X}} \Phi(x)\mu(\mathrm{d}x).
\end{equation*}
Note that $\nu \mapsto \int \Phi(y) \nu(\mathrm{d}y)$ and $\mu \mapsto \int \Phi(x)\mu(\mathrm{d}x)$ have unique minimizers $\nu^* = \delta_0$ and $\mu^*= \delta_0,$ respectively, and $\Phi(0) = 0.$ Now, assume for example that $(\mu^0, \nu^0)$ are uniform distributions on $\mathcal{Y}$ and $\mathcal{X},$ respectively (recall that $\mathcal{X}, \mathcal{Y}$ are assumed to be compact in \cite{wang2023exponentially}). Then, following the proof of \cite[Proposition 5.5]{OJMO_2022__3__A8_0}, we obtain 
\begin{equation*}
    \int_{\mathcal{Y}} \Phi(y) \nu^n(\mathrm{d}y) \approx \frac{1}{2\eta n},
\end{equation*}
and analogously
\begin{equation*}
    \int_{\mathcal{X}} \Phi(x)\mu^n(\mathrm{d}x) \approx \frac{1}{2\eta n}.
\end{equation*}
Hence, 
\begin{equation*}
    \operatorname{NI}(\mu^n, \nu^n) = \max_{(\mu, \nu)}(F^0(\nu^n, \mu) - F^0(\nu, \mu^n)) = \int \Phi(y) \nu^n(\mathrm{d}y) + \int \Phi(x)\mu^n(\mathrm{d}x) \approx \frac{1}{\eta n}.
\end{equation*}
Now, consider the entropic regularization of \eqref{eq:non-reg-game}, i.e.,
\begin{equation*}
    \min_{\mu \in \mathcal{P}(\mathcal{X})} \max_{\nu \in \mathcal{P}(\mathcal{Y})} F^{\sigma}(\mu, \nu), \quad \text{ with } F^{\sigma}(\mu, \nu) \coloneqq \int_{\mathcal{Y}} \int_{\mathcal{X}} f(x,y) \mu(\mathrm{d}x)\nu(\mathrm{d}y) + \sigma \operatorname{D_{KL}}(\mu|\pi) - \sigma \operatorname{D_{KL}}(\nu|\rho),
\end{equation*}
where $\sigma > 0$ is the regularization parameter and $\pi,\rho$ are defined as in \eqref{eq:F-nonlinear}. Using the definition of $f,$ a straightforward calculation shows that
\begin{equation*}
    F^{\sigma}(\mu, \nu) = F^{\sigma}_{\pi}(\mu) - F^{\sigma}_{\rho}(\nu), 
\end{equation*}
where 
\begin{equation*}
    F^{\sigma}_{\pi}(\mu) = \int_{\mathcal{X}} \Phi(x) \mu(\mathrm{d}x) + \sigma \operatorname{D_{KL}}(\mu|\pi),
\end{equation*}
\begin{equation*}
    F^{\sigma}_{\rho}(\nu) = \int_{\mathcal{Y}} \Phi(y) \nu(\mathrm{d}y) + \sigma \operatorname{D_{KL}}(\nu|\rho).
\end{equation*}
In this case, the simultaneous MDA update reads
\begin{equation*}
\begin{cases}
    \mu^{n+1} = \argmin_{\mu} \{\int \frac{\delta F^{\sigma}_{\pi}}{\delta \mu}(\mu^n,x)(\mu-\mu^n)(\mathrm{d}x) + \frac{1}{\eta} \operatorname{D_{KL}}(\mu| \mu^n)\},\\
    \nu^{n+1} = \argmax_{\nu} \{-\int \frac{\delta F^{\sigma}_{\rho}}{\delta \nu}(\nu^n,y)(\nu-\nu^n)(\mathrm{d}y) - \frac{1}{\eta} \operatorname{D_{KL}}(\nu| \nu^n)\}.
\end{cases}
\end{equation*}
Note that since the game is non-interactive due to the definition of $f,$ we essentially need to solve two independent optimization problems. Observe that $F^{\sigma}_{\pi}$ and $F^{\sigma}_{\rho}$ are both $\sigma$-strongly convex relative to $\operatorname{D_{KL}}$ in the sense of Lemma \ref{lemma:Vinequalities}, i.e., for any $\mu', \mu \in \mathcal{P}(\mathcal{X}),$ we have
\begin{equation*}
    F^{\sigma}_{\pi}(\mu') - F^{\sigma}_{\pi}(\mu) \geq \int \frac{\delta F^{\sigma}_{\pi}}{\delta \mu}(\mu,x)(\mu'-\mu)(\mathrm{d}x) + \sigma \operatorname{D_{KL}}(\mu'|\mu), 
\end{equation*} 
and analogously for $F^{\sigma}_{\rho}.$ Thus, we can follow the proof of \cite[Theorem $4$]{korba} with the distinction that the Bregman divergence in \cite{korba} is actually the KL divergence $\operatorname{D_{KL}}$ in our case. More specifically, in our setup, equation $(39)$ from the proof of \cite[Theorem $4$]{korba}, becomes
\begin{equation*}
    F^{\sigma}_{\pi}(\mu^{n+1}) \leq F^{\sigma}_{\pi}(\mu) - \sigma \operatorname{D_{KL}}(\mu|\mu^n) + \frac{1}{\eta} \operatorname{D_{KL}}(\mu|\mu^n) - \frac{1}{\eta} \operatorname{D_{KL}}(\mu|\mu^{n+1}),
\end{equation*}
for any $\mu \in \mathcal{P}(\mathcal{X}).$ Setting $\mu_{\sigma}^* = \argmin_{\mu} F^{\sigma}_{\pi}(\mu)$ in the inequality above, using the fact that $F^{\sigma}_{\pi}(\mu_{\sigma}^*) \leq F^{\sigma}_{\pi}(\mu^{n+1}),$ and applying the discrete-time Gronwall's lemma with the assumption $\sigma \eta \leq 1,$ we can show that
\begin{equation*}
    \operatorname{D_{KL}}(\mu_{\sigma}^*| \mu^n) \leq \exp(-\sigma \eta n) \operatorname{D_{KL}}(\mu_{\sigma}^*| \mu^0).
\end{equation*}
Analogously, we have
\begin{equation*}
    \operatorname{D_{KL}}(\nu_{\sigma}^*| \nu^n) \leq \exp(-\sigma \eta n) \operatorname{D_{KL}}(\nu_{\sigma}^*| \nu^0).
\end{equation*}
Therefore, we obtain
\begin{equation*}
    \operatorname{D_{KL}}(\mu_{\sigma}^*| \mu^n) + \operatorname{D_{KL}}(\nu_{\sigma}^*| \nu^n) \leq \exp(-\sigma \eta n) \left(\operatorname{D_{KL}}(\mu_{\sigma}^*| \mu^0) + \operatorname{D_{KL}}(\nu_{\sigma}^*| \nu^0)\right).
\end{equation*}
Then, as we argue in the proof of Theorem \ref{thm:convergence}, we can consequently show that
\begin{equation*}
    \operatorname{NI}(\mu^n,\nu^n) \leq 2C_{\sigma} \exp\left(-\frac{1}{2}\sigma \eta n\right) \sqrt{\operatorname{D_{KL}}(\mu_{\sigma}^*| \mu^0) + \operatorname{D_{KL}}(\nu_{\sigma}^*| \nu^0)}.
\end{equation*}
In conclusion, in the case where there is no interaction between players, adding regularization leads to exponential convergence of the discrete-time Fisher-Rao dynamics to the unique MNE of the game, whereas no regularization guarantees only $\mathcal{O}\left(\frac{1}{\eta n}\right)$ convergence rate to an MNE.
\end{document}